\def\newaliasedtheorem#1[#2]#3{
  \newaliascnt{#1@alt}{#2}
  \newtheorem{#1}[#1@alt]{#3}
  \expandafter\newcommand\csname #1@altname\endcsname{#3}
}
\theoremstyle{plain}
\newtheorem{maintheorem}{Theorem}
\DeclareMathOperator{\diver}{div}
\theoremstyle{definition}
\newtheorem{OQ}[]{Open Question}
\theoremstyle{remark}
\numberwithin{equation}{section}
\def\R{\mathbb R}
\def\N{{\mathbb N}}
\def\Z{{\mathbb Z}}
\def\T{{\mathbb T}}
\DeclareMathOperator{\supp}{supp}
\DeclareMathOperator{\initial}{in}
\newcommand{\p}{\partial}
\newcommand{\vt}{\tilde{\vartheta}}
\newcommand{\cc}{\mbox{cs}}
\newcommand{\ds}{\mbox{ds}}
\title[Anomalous dissipation for Onsager's solutions]
 {Onsager critical solutions of the forced \\ Navier-Stokes equations}
\author[Elia Bru\`e, Maria Colombo, Gianluca Crippa, Camillo De Lellis \and Massimo Sorella]{Elia Bru\`e, Maria Colombo, Gianluca Crippa, Camillo De Lellis \and Massimo Sorella}
\address{Elia Bru\`e 
\hfill\break School of Mathematics, Institute for Advanced Study, 1 Einstein Dr., Princeton NJ 05840, U.S.A.}
\email{elia.brue@math.ias.edu}
\address{Maria Colombo  
\hfill\break EPFL B, Station 8, CH-1015 Lausanne, Switzerland}
\email{maria.colombo@epfl.ch}
\address{Gianluca Crippa
\hfill\break Departement Mathematik und Informatik, Universit\"at Basel, Spiegelgasse 1, CH-4051 Basel, Switzerland}
\email{gianluca.crippa@unibas.ch}
\address{Camillo De Lellis
\hfill\break School of Mathematics, Institute for Advanced Study, 1 Einstein Dr., Princeton NJ 05840, U.S.A.}
\email{camillo.delellis@math.ias.edu}
\address{Massimo Sorella
\hfill\break EPFL B, Station 8, CH-1015 Lausanne, Switzerland}
\email{massimo.sorella@epfl.ch}
\begin{document}

\begin{abstract}
We answer positively to \cite{BDL22}*{Question 2.4} by building new examples of solutions to the forced $3d$-Navier-Stokes equations with vanishing viscosity, which exhibit anomalous dissipation and which enjoy uniform bounds in the space $L^3_t C^{\sfrac{1}{3}-\varepsilon}_x$, for any fixed $\varepsilon >0$.
Our construction combines ideas of~\cite{BDL22} and~\cite{CCS22}.
\end{abstract}

\maketitle

\section{Introduction}

The forced Navier--Stokes equations  on the $3$-dimensional torus $\T^3 \simeq \R^3 / \Z^3$ are given by
\begin{align} 
\tag{NS}
\label{e:NSE}
\partial_t v_{\nu} + v_\nu \cdot \nabla v_\nu + \nabla p_\nu = \nu \Delta v_\nu + F_\nu
\\
\diver v_\nu =0, \notag
\end{align}
where $v_\nu : [0, T] \times \T^3 \to \R^3$ is the velocity field, $p_\nu : [0, T] \times \T^3 \to \R$ is the pressure, $\nu> 0$ is the viscosity parameter and $F_\nu : [0, T] \times \T^3 \to \R^3$ is a (divergence-free) force that may depend on $\nu$. When $\nu =0$ the Navier--Stokes equations~\eqref{e:NSE} reduce to the forced Euler equations
\begin{align} 
\tag{E}
\label{e:E}
\partial_t v_0 + v_0 \cdot \nabla v_0 + \nabla p_0 =  F_0
\\
\diver v_0 =0. \notag
\end{align}
We consider both the Navier--Stokes equations \eqref{e:NSE} and the Euler equations \eqref{e:E} with a prescribed initial datum $v_{\initial}$ which is independent of the  viscosity parameter $\nu$, namely
\begin{equation}\label{e:Cauchy}
v_\nu  (0, \cdot) = v_{\initial} \, .
\end{equation}
Following \cite{BDL22} we study {\em smooth} solutions of \eqref{e:NSE} (namely $u_\nu$ and $F_\nu$ are both $C^\infty$), which enjoy uniform in $\nu$ bounds for $v_\nu$ and $F_\nu$ in appropriate function spaces $X$ and $Y$. The purpose is to understand which spaces $X$ and $Y$ allow for $u_\nu$ to display {\em anomalous dissipation}, more precisely whether 
\begin{equation}\label{diss_main_Onsager} 
\limsup_{\nu \downarrow 0}  \, \nu \int_0^T \int_{\T^3} | \nabla v_\nu|^2 \, dx\,dt > 0 \, .
\end{equation}
We require that the space $Y$ rules out anomalous dissipation for solutions of the forced linear Stokes equations under the assumption $\sup_\nu \|F_\nu\|_Y < \infty$, namely \eqref{diss_main_Onsager} would not hold if we eliminate the nonlinear advective term $v_\nu \cdot \nabla v_\nu$ from \eqref{e:NSE} and we have uniform bounds for the body forces in the space $Y$. 
As it is noticed in \cite{BDL22}*{Section 2} the assumption
\begin{equation}\label{bound_force}
\sup_\nu \| F_\nu \|_{L^{1+ \sigma} ([0,1]; C^{\sigma} (\T^3))} <\infty
\end{equation}
for any positive $\sigma>0$ is in fact sufficient. 

In \cite{BDL22} the first and fourth authors give examples of smooth solutions $v_\nu$ to \eqref{e:NSE} for which:
\begin{itemize}
\item[(i)]  \eqref{bound_force} holds (in fact with the stronger bound $\sup_\nu \|F_\nu\|_{L^\infty_t (C^{1-\varepsilon})}<\infty$ for any given positive $\varepsilon$),
\item[(ii)] $\sup_\nu \|v_\nu\|_{L^\infty} < \infty$,
\item[(iii)] and \eqref{diss_main_Onsager} is satisfied.
\end{itemize}
In \cite{BDL22}*{Section 2} the authors ask whether this type of behavior is still possible if  the uniform $L^\infty$ bound~(ii) is replaced by a uniform bound in some space $X$ which is close to be ``Onsager critical''. The Onsager criticality refers to the famous remark by Onsager \cite{onsager} that if $\|v\|_{L^\infty (C^{\sfrac{1}{3}+\varepsilon})} < \infty$ and $u$ solves~\eqref{e:E} with $F=0$, then such solution $u$ is energy conservative. After a first partial result by Eyink in \cite{eyinkonsag}, the latter was rigorously proved by Constantin, E, and Titi in \cite{ConstantinETiti}. It is straightforward to check that, using the arguments in \cite{ConstantinETiti}, \eqref{bound_force} and a uniform bound in $\|v_\nu\|_{L^3 (C^{\sfrac{1}{3}+\varepsilon})}$ is in fact enough to rule out~\eqref{diss_main_Onsager}. 

Onsager in \cite{onsager} stated also that the regularity class $L^\infty_t (C^{\sfrac{1}{3}}_x)$ should in fact be critical, in particular he conjectured the existence of solutions of \eqref{e:E} with $F=0$ belonging to slightly lower regularity classes of $L^\infty_t (C^{\sfrac{1}{3}}_x)$ which do not conserve the kinetic energy. After a decade of work in the area which started with~\cites{DLS1,DLS2}, the Onsager conjecture was proved by Isett in \cite{Isett} (cf. also \cite{BDSV}) using ``convex intgeration methods''. 

While Onsager's conjecture was motivated by the zero-th law of Kolmogorov's fully developed turbulence, which roughly speaking states that \eqref{diss_main_Onsager} should be a ``typical'' phenomenon, 
it seems at the moment very hard to show that at least some of the dissipative solutions of the unforced Euler equations found so far in the literature can actually be approximated by a sequence of regular solutions to the unforced Navier-Stokes. For this reason in \cite{BDL22} the authors suggested to consider the forced versions of both equations. The main result of this paper is to show that indeed \eqref{diss_main_Onsager} can be achieved for family of solutions $\{v_\nu\}_{\nu}$ which enjoy a uniform bound in a space which is just below the Onsager-critical $L^3_t (C^{\sfrac{1}{3}}_x)$, while the corresponding forces $F_\nu$ also enjoy a bound like \eqref{bound_force} which rules out \eqref{diss_main_Onsager} for solutions of the linear Stokes equations.

\begin{maintheorem}[Anomalous dissipation]\label{t_Onsager} 
Let $T=1$. For any $\alpha < 1/3$ there exist $\sigma>0$, a divergence-free initial datum $v_{in} \in C^\infty(\T^3; \R^3) $ with $ \int_{\T^3} v_{\initial}=0$, and a family of forces $\{ F_\nu \}_{\nu > 0} \subset C^\infty ( [0,1] \times \T^3; \R^3)$ satisfying \eqref{bound_force} such that:
\begin{itemize}
\item for each $\nu>0$ there is a unique solution to \eqref{e:NSE} with $v^\nu(0,\cdot)=v_{\rm in}(\cdot)$ which satisfies
\begin{align}\label{bound_main_Onsager} 
\sup_{\nu \in [0,1]} \big(\| v_\nu \|_{L^{3} ([0,1]; C^\alpha(\T^3))}  +  \| v_\nu \|_{L^{\infty } ([0,1] \times \T^3)}\big)  < \infty \, ,
\end{align}
\item \eqref{diss_main_Onsager} holds.
\end{itemize}
Furthermore, we have that $F_{\nu} \to F_0$ in $L^{1+ \sigma}((0,1); C^\sigma (\T^3))$ and $v_\nu \to v_0$ in $L^2((0,1) \times \T^3 )$ as $\nu \to 0$, and in particular $(v_0, p_0, F_0)$ is a solution of \eqref{e:E}.
\end{maintheorem}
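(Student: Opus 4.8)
The plan is to build the solutions via an explicit ansatz that couples a slow ``background'' shear-type flow with a rapidly oscillating perturbation, following the scheme of \cite{BDL22} but choosing the space and time frequencies so as to land just below the Onsager-critical threshold $L^3_t C^{1/3}_x$ instead of merely in $L^\infty_{t,x}$. Concretely, I would look for $v_\nu$ of the form $v_\nu(t,x) = \sum_{q} a_q(t)\, W_q(\lambda_q x)$, where $\{\lambda_q\}$ is a lacunary sequence of frequencies, $W_q$ are smooth divergence-free profiles (e.g.\ Mikado-type or simple shear building blocks as in \cite{CCS22}), and the amplitudes $a_q(t)$ are tuned so that $\|a_q\|_\infty \sim \lambda_q^{-\alpha}$. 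This immediately yields the two bounds in \eqref{bound_main_Onsager}: the $L^\infty_{t,x}$ bound comes from summing a geometric series in $\lambda_q^{-\alpha}$ (here $\alpha>0$ is crucial), and the $L^3_t C^\alpha_x$ bound uses that each block contributes $\|a_q(t)\| \lambda_q^{\alpha}$ to the $C^\alpha$ norm at a single frequency, so that after taking the $L^3_t$ norm and using near-orthogonality in frequency (Littlewood--Paley) one controls $\|v_\nu\|_{L^3_t C^\alpha_x}$ uniformly; the choice $\alpha<1/3$ is exactly what makes the relevant series over $q$ summable once one also asks for anomalous dissipation.

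Next I would compute the force $F_\nu$ defined simply by $F_\nu := \partial_t v_\nu + v_\nu\cdot\nabla v_\nu + \nabla p_\nu - \nu\Delta v_\nu$, where the pressure $p_\nu$ is chosen to absorb the gradient part of the self-interaction $v_\nu\cdot\nabla v_\nu$ (via the Leray projector). The key quantitative point is to verify \eqref{bound_force}: the viscous term $\nu\Delta v_\nu$ contributes $\nu \lambda_q^2 \lambda_q^{-\alpha+\sigma}$ per block, which is precisely the term producing anomalous dissipation and which must be kept uniformly bounded in $L^{1+\sigma}_t C^\sigma_x$ — this forces a relation between $\nu$ and the cutoff index $q=q(\nu)$ (one only keeps finitely many blocks, of index up to $Q(\nu)$, with $\nu \lambda_{Q(\nu)}^2 \sim 1$). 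The transport term $v_\nu\cdot\nabla v_\nu$ is the genuinely dangerous one: its worst contributions are the ``high--high to high'' and ``high--high to low'' interactions, and one must design the profiles $W_q$ (disjoint supports in physical space, or exact algebraic cancellations as in the Mikado/convex-integration toolbox, combined with the CCS22 time-scale separation) so that these interactions either vanish identically or are of lower order; what survives should be reabsorbable into $\partial_t v_\nu$ and into a controlled part of $F_\nu$. One then checks that $F_\nu$ so produced is smooth and bounded in $L^{1+\sigma}_t C^\sigma_x$ uniformly in $\nu$, and converges to a limiting $F_0$ in that space; simultaneously $v_\nu \to v_0$ in $L^2_{t,x}$ because only the tail blocks $q>Q(\nu)$ are being switched on as $\nu\downarrow 0$ and their $L^2$ mass $\sum_{q>Q(\nu)}\lambda_q^{-2\alpha}$ is a convergent tail. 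Passing to the limit in the weak formulation (using the $L^2$ convergence of $v_\nu$ and $v_\nu\otimes v_\nu$, the latter by the same frequency-localized estimates) gives that $(v_0,p_0,F_0)$ solves \eqref{e:E}.

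For anomalous dissipation \eqref{diss_main_Onsager}, I would compute $\nu\int_0^1\int_{\T^3}|\nabla v_\nu|^2 = \nu\sum_q \lambda_q^2 \int_0^1 \|a_q(t)\|^2\,dt\, \|W_q\|_{L^2}^2 \gtrsim \nu \sum_{q\le Q(\nu)} \lambda_q^{2-2\alpha}$ (using near-orthogonality again), and since $\nu\lambda_{Q(\nu)}^2\sim 1$ the top term alone is $\sim \lambda_{Q(\nu)}^{-2\alpha}\cdot$(number of comparable blocks), which one arranges to stay bounded below by a positive constant by an appropriate (logarithmic or geometric) density of frequencies near the top scale; in the $\alpha<1/3$ regime the bookkeeping balances against the $L^3_t C^\alpha_x$ bound. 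The uniqueness of $v_\nu$ for each fixed $\nu>0$ is automatic from smoothness: these are classical (smooth) solutions of the Navier--Stokes equations with a smooth force on a fixed finite time interval, hence the energy method gives uniqueness in the Leray class.

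The main obstacle I expect is the simultaneous bookkeeping of three competing requirements on the single sequence $\{\lambda_q\}$ and the amplitudes $\{a_q\}$: (a) $\|v_\nu\|_{L^3_t C^\alpha_x}$ uniformly bounded with $\alpha$ arbitrarily close to $1/3$, which pushes the amplitudes down; (b) $\nu\sum \lambda_q^{2-2\alpha}$ bounded below, i.e.\ anomalous dissipation, which pushes them up; and (c) the error force $F_\nu$ uniformly bounded in $L^{1+\sigma}_t C^\sigma_x$ with $\sigma>0$, which constrains both the transport-term errors and the viscous term. Reconciling (a) and (b) is possible only in a narrow window and only because $1/3$ is the critical exponent — this is where the ideas of \cite{CCS22} (exploiting a time variable and extra time-oscillation to gain cancellation in the nonlinearity) must be fused with the spatial construction of \cite{BDL22}. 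Getting the profiles and the $\nu$-dependent truncation to make (c) work while (a)--(b) hold is the technical heart of the argument.
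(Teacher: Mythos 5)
There is a genuine gap: your scheme is missing the structural idea that makes the whole construction work, namely the $2+\sfrac{1}{2}$-dimensional ansatz. The paper takes $v_\nu=(u_\nu,\tilde\vartheta_\nu)$ where $u_\nu$ is a two-dimensional velocity field that at every time is a \emph{single} shear flow (alternately horizontal and vertical, taken from \cite{CCS22}) and $\tilde\vartheta_\nu$ is a passive scalar advected by $u_\nu$. With this ansatz $u_\nu\cdot\nabla u_\nu\equiv 0$ and the third component of $v_\nu\cdot\nabla v_\nu$ is exactly the advection term of the scalar equation, so the force is simply $F_\nu=(\partial_t u_\nu-\nu\Delta u_\nu,0)$ and contains \emph{no} nonlinear error whatsoever. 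Your proposal instead defines $F_\nu$ as the full Navier--Stokes residual of a superposition $\sum_q a_q(t)W_q(\lambda_q x)$ and asks that the high--high interactions of $v_\nu\cdot\nabla v_\nu$ ``vanish identically or be of lower order'' by profile design. That is not a proof step but a restatement of the hard problem: disjoint supports do not kill $v\cdot\nabla v$ for a genuine superposition, and Mikado-type cancellations produce residual errors that require a convex-integration iteration, which would be incompatible with both the claimed uniqueness of the smooth solution at fixed $\nu$ and the uniform $L^{1+\sigma}_tC^\sigma_x$ bound on the force.

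The second, related gap is in the dissipation mechanism. You compute $\nu\int|\nabla v_\nu|^2\sim\nu\sum_{q\le Q(\nu)}\lambda_q^{2-2\alpha}$ with $\nu\lambda_{Q(\nu)}^2\sim 1$, which gives $\sim\lambda_{Q(\nu)}^{-2\alpha}\to 0$; your fix (many blocks at comparable top frequency) is in tension with the very $C^\alpha$ bound you are imposing, and you offer no quantitative resolution. In the paper the velocity components do \emph{not} dissipate anomalously at all --- they converge strongly --- and the entire dissipation is carried by the passive scalar, whose gradient is amplified exponentially by the alternating shear (a mixing mechanism), with the lower bound $\limsup_\nu 2\nu\int_0^1\|\nabla\tilde\vartheta_\nu\|_{L^2}^2>\sfrac12$ imported from \cite{CCS22}*{Theorem~A}. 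The exponent $\sfrac13$ then emerges not from a ``narrow frequency window'' but from balancing the $L^3$ integrability in time against the growth rate $\|\nabla\vartheta_\nu\|_{L^\infty(\mathcal I_q)}\lesssim a_{q+1}^{-1-3\epsilon(1+\delta)}$ on the short intervals $\mathcal I_q$ accumulating at $t=1$, where all the dissipation concentrates. Without the decoupled scalar there is no known way to make your direct velocity-spectrum accounting close.
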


\begin{remark}
In our construction all the dissipation occurs at the time $T=1$, namely \eqref{diss_main_Onsager} fails at any~$T<1$.
In a forthcoming paper \cite{DRI22} De Rosa and Isett point out that this type of ``instantaneous loss of energy'' cannot occur at a time $T \in (0,1)$ for solutions belonging to $L^p([0,1];C^{\sfrac{1}{3}-}(\T^3))$ for any $p>3$. 
\end{remark}

\begin{remark}\label{r:remark-weakly}
If we only required that the forces $\{ F_\nu \}_{\nu > 0}$ were uniformly bounded in $L^1 ((0,1); L^\infty (\T^3))$, then anomalous dissipation would be already possible for solutions of the forced heat equation. Indeed, for any $\nu \in (0,1)$ such that $\nu^{-1/2} \in \N$ we can consider $\vartheta_\nu : [0,1] \times \T^3 \to \R$ defined as
$$
\vartheta_{\nu} (t,x) = (e^{- 4 \pi^2 t} -1 ) \sin (2 \pi \nu^{-1/2} x),
$$
and observe that it solves 
\begin{align}\label{e:heat}
\begin{cases}
\partial_t \vartheta_\nu - \nu \Delta \vartheta_\nu = - 4 \pi^2 \sin (2 \pi \nu^{-1/2} x) =: F_\nu 
\\
\vartheta_{\nu}(0, \cdot) \equiv 0.
\end{cases}
\end{align} 
It is straightforward to check that $\nu \int_0^1 \int_{\T^3} |\nabla  \vartheta_\nu (t,x)|^2 dx dt \geq 1/4$ for every $\nu \in (0,1)$ as above. The latter example can be easily modified to produce an analogous one for the linear Stokes equations.

 Note that the crucial point is in the oscillations introduced by the sequence $F_\nu$. In particular, strong convergence in $L^1_t L^2_x$ of $F_\nu$ would actually suffice to show that the unique solutions of \eqref{e:heat} satisfy $\nu \int_0^1 \int_{\T^3} |\nabla  \vartheta_\nu (t,x)|^2 dx dt  \to 0$.
\end{remark}

The following open question was also raised in \cite{BDL22} and at present the methods of this work do not seem strong enough to address it. 

\begin{OQ}
Can Theorem \ref{t_Onsager} be shown for Leray solutions but replacing $F_\nu$ with a $\nu$-independent force in the space $L^1( (0,2) ; L^\infty (\T^3))$?
\end{OQ}

 In view of Remark \ref{r:remark-weakly} even producing one such example with force in $L^1 ((0,2); L^2 (\T^3))$ seems interesting and highly nontrivial.

\subsection{Lack of selection principle and non-uniqueness}

As in \cite{CCS22}, a byproduct of our techniques is the lack of a selection principle under vanishing viscosity for bounded solutions of the three dimensional forced Euler equations, if the force converges in the vanishing viscosity limit. We say that a weak solution $v \in L^\infty ((0,T); L^2(\T^3))$ of the forced Euler equations \eqref{e:E} is admissible if  
\begin{align} \label{admissible}
    \int_{\T^3} |v(x,t)|^2 dx \leq \int_{\T^3} |v_{\initial}(x)|^2 dx + 2 \int_{\T^3} F (x,t) \cdot v(x,t) dx 
\end{align} 
for $a.e.$ $t\in (0,T)$.

We will show that the problem of uniqueness and vanishing viscosity selection in the class of admissible solutions for \eqref{e:E} is related to having a solution in the space $L^1_t (W^{1,\infty}_x)$  (this is essentially the threshold for classical ``weak-strong'' uniqueness results, see e.g. \cites{W18,DRIS22}). In particular uniqueness and selection both fail for solutions in $L^1 ((0,T); C^\alpha (\T^3))$ for any $\alpha <1$. 

\begin{remark}
The nonuniqueness of admissible solutions has been already shown in the class $C^{\beta} ((0,T) \times \T^3)$ for $\beta < 1/3$ for the unforced Euler equations using the convex integration technique, cf.~the aforementioned papers \cites{DLS1,DLS2,Isett,BDSV}. 
\end{remark}

\begin{maintheorem}[Nonuniqueness and lack of selection I]\label{t_selection2}
Let $T=2$ and let $\alpha ' \in [0, 1)$ be given. Then there are:
\begin{itemize}
\item[(a)] $\sigma>0$ and  a family of smooth body forces $F_\nu$ satisfying \eqref{bound_force},
\item[(b)] a limit $F_0$ such that $ F_\nu \to F_0 $  in $L^{1+ \sigma} ((0,2) ; C^\sigma ( \T^3))$,
\item[(c)] a divergence-free initial datum $v_{\initial} \in C^\infty(\T^3)$ with $ \int_{\T^2} v_{\initial}=0$, 
\item[(d)] and a family $\{ v_\nu \}_{\nu >0}$ of (unique) smooth solutions of \eqref{e:NSE} and \eqref{e:Cauchy}
\end{itemize}
such that the following holds:
\begin{itemize}
\item[(i)] $\sup_{\nu \in [0,1]}   \| v_\nu \|_{L^{\infty } ((0,2) \times \T^3)} \leq 1$;
\item[(ii)] $\{v_\nu\}_{\nu >0}$ has at least two distinct limit points, as $\nu \to 0$, in the $L^\infty$ weak$^*$ topology, which are two distinct bounded admissible solutions $v_0^{\cc}$ and $v_0^{\ds}$ of \eqref{e:E} and \eqref{e:Cauchy};
\item[(iii)] furthermore, $v_0^{\cc} \in L^{1}((0,2); C^{\alpha ' } (\T^3)) \cap L^\infty$ satisfies the following energy balance
\begin{align} \label{eq:conservative}
 \| v_0^{\cc} (t, \cdot ) \|_{L^2}^2 =  \| v_{\initial} \|_{L^2}^2 + 2 \int_0^t \int_{\T^3} F_0 \cdot v_0^{\cc}   
\qquad \text{for a.e. $t \in (0,2)$,}
\end{align}
while $v_0^{\ds} \in L^\infty$ exhibits the strict dissipation 
\begin{align} \label{eq:dissipative}
\| v_0^{\ds} (t, \cdot ) \|_{L^2}^2  <  \frac{\| v_{\initial} \|_{L^2}^2}{2} + 2 \int_0^t \int_{\T^3} F_0 \cdot v_0^{\ds} \, 
\qquad \text{for any $t \in [1,2)$.}
\end{align} 
\end{itemize}
\end{maintheorem}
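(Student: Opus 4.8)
The plan is to reduce Theorem~\ref{t_selection2} to the construction of a one-parameter family of solutions of a forced linear advection-diffusion equation, lifted to the Navier-Stokes system by the device of \cite{BDL22}. One looks for $v_\nu$ of the form $v_\nu(t,x) = \big(u(t,x_1,x_2),\,\theta_\nu(t,x_1,x_2)\big)$, where $u$ is a prescribed divergence-free field on $\T^2$ (driven by its own smooth body force) and $\theta_\nu\colon[0,2]\times\T^2\to\R$ solves
\begin{equation}\label{e:plan-ad}
\partial_t\theta_\nu + u\cdot\nabla\theta_\nu = \nu\Delta\theta_\nu + g_\nu, \qquad \theta_\nu(0,\cdot)=\theta_{\rm in}.
\end{equation}
For such $v_\nu$ the incompressibility constraint and the third momentum equation reduce exactly to \eqref{e:plan-ad} (with no pressure), and the energy identity decouples as $\tfrac{d}{dt}\big(\tfrac12\|\theta_\nu\|_{L^2}^2\big) = -\nu\|\nabla\theta_\nu\|_{L^2}^2 + \int_{\T^2}g_\nu\theta_\nu$, the $u$-part being fixed. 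Hence the $L^\infty$ bound (i), the strong convergence of the forces, the energy balance \eqref{eq:conservative} and the strict loss \eqref{eq:dissipative}, and the two-limit-point statement (ii) all follow from the analogous facts for \eqref{e:plan-ad}: it suffices to produce $u$ (hence its force), a family $g_\nu\to g_0$ in $L^{1+\sigma}_tC^\sigma_x$, and $\theta_{\rm in}$ so that $\{\theta_\nu\}$ is uniformly bounded in $L^\infty$, has a weak-$*$ limit point $\theta_0^{\cc}\in L^1_tC^{\alpha'}_x\cap L^\infty$ satisfying the transported-scalar energy balance, and a weak-$*$ limit point $\theta_0^{\ds}\in L^\infty$ which is a bounded solution of the limiting transport equation strictly losing energy on $[1,2)$; choosing $\|\theta_{\rm in}\|_{L^2}^2$ to exceed half of $\|v_{\rm in}\|_{L^2}^2$ then produces the quantitative gap in \eqref{eq:dissipative}.

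For \eqref{e:plan-ad} I would combine the self-similar multi-scale scheme of \cite{BDL22} (used here only to push the Hölder regularity of the conservative limit to the Onsager threshold $C^{1/3-}$, in particular any prescribed $\alpha'<1$) with the alternating mechanism of \cite{CCS22} that produces lack of selection. Concretely, $u$ is assembled from countably many stages on time intervals $I_k=[\tau_{k-1},\tau_k]$ with $\tau_k\uparrow 1$, stage $k$ acting at geometric scale $\lambda_k^{-1}$ with $\lambda_k\uparrow\infty$, plus a final $\nu$-independent read-out stage on $[1,2]$; the parameters $(\lambda_k,|I_k|,\text{amplitudes})$ are tuned to $\alpha'$ and to the chosen $\sigma$. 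Writing $k(\nu)$ for the stage whose scale is comparable to $\sqrt\nu$, the stages are designed so that: stages $j<k(\nu)$ complete essentially inviscidly and return $\theta_\nu$ to a fixed reference profile; stages $j>k(\nu)$ act at scales finer than $\sqrt\nu$ on a field that diffusion has already homogenised, contributing only $o(1)$ to both $\|\theta_\nu\|_{L^2}$ and $\nu\|\nabla\theta_\nu\|_{L^2}^2$; while the critical stage $k(\nu)$ has a genuinely different character according to which of two classes $k(\nu)$ falls in (e.g.\ by parity) -- an exponential-stretching stage that forces $\nu\int|\nabla\theta_\nu|^2\gtrsim 1$ and homogenises $\theta_\nu$ for one class, and a gentle stage that produces only $o(1)$ dissipation and transports $\theta_\nu$ essentially undistorted for the other. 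Letting $\nu\downarrow0$ through the two classes then yields two weak-$*$ limit points of $\{\theta_\nu\}$: the homogenised one $\theta_0^{\ds}$ (this half of the construction essentially reproduces Theorem~\ref{t_Onsager}, with all dissipation at $t=1$) and the transported one $\theta_0^{\cc}$, which is the composition of $\theta_{\rm in}$ with the (infinitely many, summably sharpening) smooth stage-flows followed by the read-out, and which one checks lies in $L^1_tC^{\alpha'}_x\cap L^\infty$ and conserves the balance.

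With \eqref{e:plan-ad} in hand the rest is bookkeeping: lift to $v_\nu=(u,\theta_\nu)$ on $\T^3$; check that the total force $F_\nu$ (assembled from the force making $u$ solve the first two components of \eqref{e:NSE}, which differs from the corresponding Euler force only by the term $-\nu\Delta u\to0$ in $L^{1+\sigma}_tC^\sigma_x$, together with $g_\nu$) is smooth, satisfies \eqref{bound_force} for a suitable $\sigma>0$ uniformly in $\nu$, and converges to $F_0$ in $L^{1+\sigma}_tC^\sigma_x$; observe that for each fixed $\nu>0$ the explicitly constructed $v_\nu$ is smooth and globally defined, hence by weak-strong uniqueness is the unique Leray solution of \eqref{e:NSE}--\eqref{e:Cauchy}, and normalise amplitudes so that $\|v_\nu\|_{L^\infty}\le 1$; and verify that $v_0^{\cc}=(u,\theta_0^{\cc})$ and $v_0^{\ds}=(u,\theta_0^{\ds})$ are bounded admissible solutions of \eqref{e:E} with force $F_0$ and datum $v_{\rm in}$, distinct because of the energy-balance/strict-dissipation dichotomy, which also gives \eqref{eq:conservative}--\eqref{eq:dissipative}.

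I expect the main obstacle to be the simultaneous calibration of two requirements that pull in opposite directions. On the one hand, the forces $F_\nu$ must converge strongly in $L^{1+\sigma}_tC^\sigma_x$, so the ``switch'' that makes the critical stage $k(\nu)$ dissipative for one class and gentle for the other can be implemented only within a vanishing envelope: one must show that an $o(1)$ modification of $g_\nu$ (or an alternation between two force families sharing the same limit) genuinely flips $\nu\int|\nabla\theta_\nu|^2$ from $o(1)$ to $\gtrsim 1$, i.e.\ that the resolved dynamics depends sensitively on it. On the other hand, the two limit points must be quantitatively separated, which forces uniform-in-$\nu$ control: a lower bound on the dissipation along one class, an $o(1)$ upper bound along the other, and $L^2$-smallness of the cumulative error of all the ``inert'' stages, none of which may degrade as $k(\nu)\to\infty$. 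Arranging that a stage at scale $\sim\sqrt\nu$ can be made either strongly dissipative or essentially non-dissipative at will, while keeping every stage Onsager-subcritical and keeping the net effect of all earlier stages exactly reversible, is the technical heart of the argument and the point where the fine structure of the schemes of \cite{BDL22} and \cite{CCS22} has to be merged.
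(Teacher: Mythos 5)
Your overall architecture matches the paper's: a $2+\sfrac12$-dimensional reduction $v_\nu=(u,\theta_\nu)$ with a multi-scale alternating shear flow $u$, forces acting only through the first two components (your extra scalar force $g_\nu$ is unnecessary), and two limit points extracted along viscosity subsequences indexed by the relation between $\nu$ and the geometric scales. But the step you yourself flag as ``the technical heart'' --- making the critical stage dissipative for one class of viscosities and inert for the other --- is both missing and, as sketched, based on a mechanism that does not close. You keep all infinitely many stages active for every $\nu$ and propose interleaving two stage types, relying on the claim that stages finer than $\sqrt\nu$ act ``on a field that diffusion has already homogenised'' and contribute $o(1)$. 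On the conservative branch this fails: if the critical stage is ``gentle'' and dissipates only $o(1)$, the scalar still carries $O(1)$ structured variance, and the next exponential-stretching stage --- designed to dissipate at a viscosity \emph{smaller} than the current $\nu$ --- will dissipate at least as efficiently at the larger viscosity $\nu$ (diffusion only helps when the viscosity exceeds a stage's design value). So your conservative subsequence would still exhibit $O(1)$ anomalous dissipation. The paper avoids this in two ways you do not anticipate: (a) the velocity field seen by $v_\nu$ is the truncation $u_q=u\,\mathbbm{1}_{K_q}$ with $K_q=[0,1-T_q]\cup[1+T_q,2]$, so for $\nu$ in the $q$-th window there simply are no stages beyond $q-1$ (this is also what makes $v_\nu$ smooth, since $u$ itself is singular at $t=1$, and it forces the $\nu$-dependence of $F_\nu$ together with the check $\|\partial_t u\|_{L^{1+\sigma}(K_q^c;C^\sigma)}\to 0$); and (b) all stages are of a single type, and the dichotomy comes from choosing two sequences $\tilde\nu_q$ and $\nu_q$ \emph{inside the same window} $(\tilde\nu_{q+1},\tilde\nu_q]$ --- at the top of the window the gradients $\sim a_q^{-(1+3\epsilon(1+\delta))}$ accumulated by the last active stage make $\nu\int|\nabla\theta_\nu|^2$ of order one, while deeper in the window the cumulative dissipation over \emph{all} active stages is $o(1)$.

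A second substantive miss is the description of the conservative limit as the scalar ``transported essentially undistorted.'' In the construction the scalar is violently filamented as $t\to1^-$ along \emph{both} branches; the conservative limit is conservative because the inviscid solution is a rearrangement and because the reflection symmetry $u(t,\cdot)=-u(2-t,\cdot)$ un-mixes it on $(1,2)$, yielding the unique bounded symmetric solution $\vartheta_0$ of the transport equation, whose membership in $L^1_tC^{\alpha'}_x$ is then a quantitative estimate (Proposition~\ref{proposition}\eqref{prop:regularity-solution}) rather than a soft consequence of gentleness. Without the truncation, the in-window dichotomy, and the reflection symmetry, the proposal does not yield the coexistence of \eqref{eq:conservative} and \eqref{eq:dissipative} for a single force family and initial datum.
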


If we give up the regularity of the conservative solution $v_0^{\cc}$ it is possible to show nonuniqueness and lack of selection for much smoother forces.

\begin{maintheorem}[Nonuniqueness and lack of selection II] \label{t_selection}
Let $T=2$ and let $\alpha ' \in [0,1)$ be given. Then there are:
\begin{itemize}
\item[(a)] a family $\{F_\nu\}_{\nu >0}$ of smooth forces and a limiting $F_0$ such that 
$ F_\nu \to F_0 $  in $C^{\alpha '} ((0,2) \times \T^3)$,
\item[(b)] a divergence-free initial datum $v_{\initial} \in C^\infty(\T^3)$ with $ \int_{\T^3} v_{\initial}=0$,
\item[(c)] and a family $\{ v_\nu \}_{\nu >0}$ of (unique) smooth solutions of \eqref{e:NSE} and \eqref{e:Cauchy},
\end{itemize}
such that the following holds:
\begin{itemize}
\item[(i)] $\sup_{\nu \in [0,1]}   \| v_\nu \|_{L^{\infty } ((0,2) \times \T^3)} \leq 1$;
\item[(ii)] $\{v_\nu\}$ has at least two distinct limit points, as $\nu \to 0$, in the $L^\infty$ weak$^*$ topology, which are two distinct bounded admissible solutions $v_0^{\cc}$ and $v_0^{\ds}$ of \eqref{e:E} and \eqref{e:Cauchy};
\item[(iii)] $v_0^{\cc}$ satisfies \eqref{eq:conservative} while $v_0^{\ds}$ satisfies \eqref{eq:dissipative}.
\end{itemize}
\end{maintheorem}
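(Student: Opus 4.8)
The plan is to derive Theorem~\ref{t_selection} from Theorem~\ref{t_selection2} by a change of spatial scale, trading the regularity of the conservative branch for the regularity of the forces. The starting observation is that the construction behind Theorem~\ref{t_selection2} is concentrated, in a controlled way, along a sequence of frequencies going to infinity, and that all the bounds in \eqref{bound_force}, in property~(i), and in \eqref{eq:conservative}--\eqref{eq:dissipative} are produced by a superposition of building blocks supported (in frequency space) at scales $\lambda_q$ with prescribed amplitudes. The $L^1_t C^{\alpha'}_x$ bound on $v_0^{\cc}$ in Theorem~\ref{t_selection2} is the binding constraint on how rough the forces are allowed to be; if we are willing to let $v_0^{\cc}$ be merely $L^\infty$ (hence give up (iii) of Theorem~\ref{t_selection2} in its quantitative $C^{\alpha'}$ form), then we may choose the frequency parameters in the construction much more generously, and this directly buys Hölder regularity of the forces uniformly in $\nu$, as well as $C^{\alpha'}$ convergence $F_\nu \to F_0$.

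\textbf{Steps.} First I would recall the explicit structure of the solution family $\{v_\nu\}$ and forces $\{F_\nu\}$ furnished by the proof of Theorem~\ref{t_selection2}: a mixing/transport mechanism acting on a passive structure, combined with a pressure correction, so that $v_\nu \cdot \nabla v_\nu + \nabla p_\nu$ is absorbed into $F_\nu$, together with the viscous term $\nu \Delta v_\nu$. Second, I would introduce a rescaling of the spatial variable (and a matching time rescaling, together with a rescaling of the viscosity parameter $\nu \mapsto \lambda^2 \nu$ or similar) under which the Navier--Stokes equations \eqref{e:NSE} are invariant up to relabelling of constants; the point of the rescaling is that it leaves the $L^\infty$ norm of $v_\nu$ unchanged (so~(i) persists) and leaves the $L^2$ energy balances \eqref{eq:conservative}--\eqref{eq:dissipative} structurally intact, while it multiplies spatial derivatives by a large factor, which is precisely what we do \emph{not} want for the forces. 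Hence the real mechanism is not a brute rescaling but a reparametrisation \emph{inside} the construction: choose the defining frequency sequence $\lambda_q$ to grow much more slowly (or choose the amplitude sequence to decay faster) so that the force $F_\nu$ — which in the original construction sits at frequency $\sim \lambda_q$ with amplitude dictated by the advective term — now has small high-frequency content, hence lies in $C^{\alpha'}_x$ uniformly, at the cost that the partial sums defining $v_0^{\cc}$ no longer converge in $C^{\alpha'}_x$ but only weak-$*$ in $L^\infty$. Third, I would check that with the new parameter choice the three output properties survive: (i) the uniform $L^\infty \le 1$ bound is unaffected because it comes from the amplitude normalisation of the building blocks and not from their frequency; (ii) the existence of two distinct weak-$*$ limit points $v_0^{\cc}, v_0^{\ds}$ follows from the same "on/off" switching argument as in Theorem~\ref{t_selection2}, which only uses that the perturbations oscillate at diverging frequencies and average to two different weak limits; (iii) the energy identity \eqref{eq:conservative} and the strict dissipation \eqref{eq:dissipative} are exact identities/inequalities at the level of the limiting objects and are therefore inherited, since both are consequences of the structure of the limit fields rather than of quantitative Hölder bounds. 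Finally, I would verify that $F_\nu \to F_0$ in $C^{\alpha'}((0,2)\times\T^3)$: with the relaxed frequency growth the tail of the series defining $F_\nu - F_0$ is geometrically small in $C^{\alpha'}$, and the time regularity is handled exactly as in the $L^{1+\sigma}_t C^\sigma_x$ estimate of Theorem~\ref{t_selection2}, only now upgraded to a uniform-in-time Hölder bound because the time cut-offs can be chosen smooth at a coarser scale.

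\textbf{Main obstacle.} The delicate point is to keep the \emph{anomalous/strict dissipation} \eqref{eq:dissipative} for $v_0^{\ds}$ alive while simultaneously demanding that $F_\nu$ be uniformly $C^{\alpha'}$ in space and time. Strict dissipation requires genuinely rough behaviour of the limiting dissipative solution (by the Constantin--E--Titi argument recalled in the introduction, together with \eqref{bound_force}, a solution that is $L^3_t C^{1/3+}_x$ with a regular force cannot dissipate), so the construction must concentrate enough "energy flux" at the relevant scales; but making the forces smoother pushes in the opposite direction, since the force must still balance $v_\nu\cdot\nabla v_\nu$, which is quadratic in a rough field. The resolution — and the heart of the proof — is that in this family the dissipation is \emph{localised in time} (it all happens near $t=1$, as in Theorem~\ref{t_Onsager}'s remark), so the rough part of $v_\nu$ and hence the large part of $F_\nu$ is confined to a small time window; by giving up $C^{\alpha'}$ regularity of $v_0^{\cc}$ one gains the freedom to make that window's contribution to $\|F_\nu\|_{C^{\alpha'}}$ still controlled, because the amplitude of the building blocks there can be kept order one while their frequency is only moderate. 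Making this trade-off quantitative — choosing the parameter sequences so that all of (i), (ii), (iii) and the $C^{\alpha'}$ force convergence hold simultaneously — is the one genuinely non-routine computation, and it is essentially a bookkeeping optimisation over the same inequalities already established for Theorem~\ref{t_selection2}.
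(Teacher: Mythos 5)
Your proposal matches the paper's proof in its essential mechanism: the paper reruns the identical $2+\sfrac{1}{2}$-dimensional construction with the parameter $\beta$ set to $0$ (so that $\gamma=\sfrac{\delta}{8}$ is small), which is precisely the ``reparametrisation inside the construction'' you describe --- slower temporal concentration and milder frequency growth of $u$ buy the uniform $C^{\alpha'}$ bound on $F_\nu=(\partial_t u_q-\nu\Delta u_q,0)$ and the convergence $F_\nu\to F_0$, at the cost of any H\"older regularity for the conservative branch, while the $L^\infty\le 1$ bound, the two limit points along $\tilde\nu_q$ and $\nu_q=a_q^{2+3\epsilon}$, and the energy identities go through as in Theorem~\ref{t_selection2}. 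One remark: the ``main obstacle'' you single out (the force having to balance the quadratic term $v_\nu\cdot\nabla v_\nu$) does not actually arise, since the construction is an alternating shear flow carrying a passive scalar, so the nonlinearity vanishes identically and $F_\nu$ only needs to reproduce $\partial_t u_q-\nu\Delta u_q$; the genuine constraint is the space--time H\"older bound on $\partial_t u$, which is what forces $\gamma$ small.
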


Obviously the following are simple corollaries of the previous theorems.

\begin{corollary}[Non uniqueness for the forced Euler equations I]
Let $\alpha ' \in [0,1)$ be given. There exist $\sigma >0$, a body force $F_0 \in L^{1+ \sigma} ((0,2); C^\sigma (\T^3))$  and a divergence-free initial datum $v_{\initial } \in C^\infty (\T^3)$ such that  the $3d$ forced Euler equations \eqref{e:E}-\eqref{e:Cauchy} admit at least two distinct admissible bounded solutions. Furthermore, one of which belongs $L^{1}((0,2); C^{\alpha '} (\T^3))  $.
\end{corollary}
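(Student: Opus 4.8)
The plan is to read the corollary directly off Theorem~\ref{t_selection2}; no new construction is needed. First I would invoke Theorem~\ref{t_selection2} with the given exponent $\alpha' \in [0,1)$. This provides a number $\sigma>0$, a family of smooth forces $\{F_\nu\}_{\nu>0}$ satisfying \eqref{bound_force}, a limiting force $F_0$ with $F_\nu \to F_0$ in $L^{1+\sigma}((0,2); C^\sigma(\T^3))$, a divergence-free initial datum $v_{\initial} \in C^\infty(\T^3)$, and the corresponding family $\{v_\nu\}_{\nu>0}$ of unique smooth solutions of \eqref{e:NSE}--\eqref{e:Cauchy}. Since $F_\nu \to F_0$ in $L^{1+\sigma}((0,2); C^\sigma(\T^3))$, the limit $F_0$ lies in that space, which is precisely the first claim of the corollary with this $\sigma$ and this $v_{\initial}$.

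Next I would extract the two solutions from items (ii)--(iii) of Theorem~\ref{t_selection2}. By (ii), the family $\{v_\nu\}$ has at least two distinct $L^\infty$-weak$^*$ limit points $v_0^{\cc}$ and $v_0^{\ds}$, and both are bounded admissible solutions (in the sense of \eqref{admissible}) of the forced Euler equations \eqref{e:E} with the force $F_0$ and datum $v_{\initial}$ obtained above; being distinct, they furnish the two distinct admissible bounded solutions required. Finally, (iii) states that $v_0^{\cc} \in L^1((0,2); C^{\alpha'}(\T^3)) \cap L^\infty$, which gives the last assertion of the corollary (that one of the two solutions enjoys the claimed H\"older regularity).

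The only point worth emphasizing is that there is genuinely nothing to prove beyond Theorem~\ref{t_selection2}: admissibility, boundedness, distinctness, and the H\"older regularity of $v_0^{\cc}$ are all already part of its conclusion, so the corollary follows by merely discarding the extra information (the Navier--Stokes approximants, the uniform $L^\infty$ bound by $1$, and the precise energy identities \eqref{eq:conservative}--\eqref{eq:dissipative}) that Theorem~\ref{t_selection2} supplies. Hence the ``main obstacle'' lies entirely in the proof of Theorem~\ref{t_selection2}, not in this deduction.
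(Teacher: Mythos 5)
Your proposal is correct and matches the paper exactly: the paper itself presents this statement as an immediate consequence of Theorem~\ref{t_selection2} ("Obviously the following are simple corollaries of the previous theorems"), and your deduction — taking $\sigma$, $F_0$, $v_{\initial}$, and the two distinct bounded admissible limit points $v_0^{\cc}\in L^1((0,2);C^{\alpha'}(\T^3))$ and $v_0^{\ds}$ directly from items (a)--(iii) of that theorem — is precisely the intended argument.
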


\begin{corollary}[Non uniqueness for the forced Euler equations II]
Let $\alpha ' \in [0,1)$ be given. There exist a body force $F_0 \in C^{\alpha ' } ((0,2) \times \T^3))$  and a divergence-free initial datum $v_{\initial } \in C^\infty (\T^3)$ such that  the $3d$ forced Euler equations \eqref{e:E}-\eqref{e:Cauchy} admit at least two distinct admissible bounded solutions.
\end{corollary}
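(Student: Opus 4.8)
The plan is to read this off directly from Theorem~\ref{t_selection}: the corollary is simply that theorem with the Navier--Stokes family forgotten. First I would invoke Theorem~\ref{t_selection} with the prescribed exponent $\alpha' \in [0,1)$. Its item~(a) supplies a family of smooth forces $\{F_\nu\}_{\nu>0}$ together with a limit $F_0$ such that $F_\nu \to F_0$ in $C^{\alpha'}((0,2)\times\T^3)$; in particular $F_0 \in C^{\alpha'}((0,2)\times\T^3)$. Its item~(b) supplies a divergence-free $v_{\initial} \in C^\infty(\T^3)$ with $\int_{\T^3} v_{\initial} = 0$. These are precisely the body force and the initial datum asserted in the corollary.

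Next I would use item~(ii): the family $\{v_\nu\}$ of (unique) smooth solutions of \eqref{e:NSE}--\eqref{e:Cauchy} has at least two distinct $L^\infty$ weak$^*$ limit points $v_0^{\cc}$ and $v_0^{\ds}$, and each of them is a bounded admissible weak solution of the forced Euler equations~\eqref{e:E} with force $F_0$, satisfying the Cauchy condition~\eqref{e:Cauchy} with datum $v_{\initial}$ and the admissibility inequality~\eqref{admissible}. Hence \eqref{e:E}--\eqref{e:Cauchy} admits at least the two admissible bounded solutions $v_0^{\cc}$ and $v_0^{\ds}$, which is exactly the statement to be proved.

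The only point worth a remark — and it is already part of the conclusion of Theorem~\ref{t_selection} — is that $v_0^{\cc}$ and $v_0^{\ds}$ are genuinely distinct: this is explicit in item~(ii), and it is also transparent from item~(iii), since $v_0^{\cc}$ saturates the energy balance~\eqref{eq:conservative} whereas $v_0^{\ds}$ obeys the strictly dissipative bound~\eqref{eq:dissipative}, so the two relations cannot hold for one and the same function. Since the entire content of the corollary is inherited verbatim from Theorem~\ref{t_selection}, I do not expect any genuine obstacle here; the ``proof'' is the one-line observation that a theorem producing vanishing-viscosity limits of forced Navier--Stokes in particular produces (admissible, bounded, nonunique) solutions of the forced Euler equations with a $C^{\alpha'}$ force.
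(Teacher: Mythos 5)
Your proposal is correct and matches the paper exactly: the paper states this as an immediate consequence of Theorem~\ref{t_selection} (``Obviously the following are simple corollaries of the previous theorems''), and your argument — reading off $F_0\in C^{\alpha'}((0,2)\times\T^3)$, the smooth divergence-free datum, and the two distinct bounded admissible limit solutions $v_0^{\cc}$ and $v_0^{\ds}$ directly from items (a), (b), (ii), (iii) of that theorem — is precisely that observation. Nothing further is needed.
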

 
We remark that, with a totally different method, Vishik in \cites{VishikI, VishikII} has produced nonuniqueness examples for the incompressible Euler equations in $\mathbb R^2$ in vorticity formulation when the solutions have vorticity in $C ([0,T], L^\infty \cap L^p)$ for any fixed $p<\infty$, while the curl of the body force belongs to $L^{1+\sigma} ([0,T], L^p)$ (cf. the lecture notes \cite{ABCDGJK}). In particular, using classical Calderon-Zygmund estimates, one can easily see that the velocities of these solutions belong to $C ([0,T], W^{1,p}_{\rm loc})$, while the body forces belong to~$L^{1+\sigma} ([0,T], W^{1,p}_{\rm loc})$. In fact Vishik's techniques have been successfully transposed to even show nonuniqueness of Leray solutions of the forced Navier-Stokes equations {\em at a fixed positive viscosity~$\nu>0$}, see \cite{ABC}. 

While the nature of the nonuniqueness results in \cites{VishikI, VishikII,ABCDGJK,ABC} is quite different from the constructions of this paper, they also strongly suggest that all the results of this section are likely to hold for body forces $\{F_\nu\}$ enjoying uniform bounds in $L^1 ([0,T], W^{1,p})$ and solutions of \eqref{e:NSE} enjoying uniform bounds in $L^\infty ([0,T], W^{1,p})$. They also suggest that the following question has likely a positive answer.

\begin{OQ}
Can the lack of selection of Theorems \ref{t_selection2} and \ref{t_selection} be shown with a $\nu$-independent force $F \in L^1( (0,2) ; L^\infty (\T^3))$ replacing the family 
$\{F_\nu\}_{\nu >0}$ (and $\{v_\nu\}_{\nu >0}$ a family of Leray solutions of~\eqref{e:NSE}-\eqref{e:Cauchy})?
\end{OQ} 

\subsection*{Acknowledgments}
EB is supported by the Giorgio and Elena Petronio Fellowship at the Institute for Advanced Study. MC and MS were supported by the SNSF Grant 182565 and by the Swiss State Secretariat for Education, Research and lnnovation (SERI) under contract number M822.00034. GC has been partially supported by the ERC Starting Grant 676675 FLIRT.

\section{Strategy of the proof} \label{section:heuristics}

We use the same strategy as in~\cites{BDL22,JY20,JYo20} and consider a $2 + \sfrac{1}{2}$-dimensional Navier-Stokes solution, for which the evolution decouples into a forced $2d$-Navier-Stokes system and a scalar advection-diffusion equation. The solution $v_\nu$ of the forced $2d$-Navier-Stokes system is a suitable regularization of the two-dimensional velocity field $u : [0,2] \times \T^2 \to \R^2$ constructed  in~\cite{CCS22}*{Section~4}, which is an alternating shear flow, that is, for every $t \in (0,1)$ we have either $u(t, x_1, x_2) = (W(t, x_2), 0)$ or $u(t, x_1, x_2) = (0, W(t, x_1))$. The third component of the $3d$-Navier-Stokes solution solves an advection-diffusion equation and will exhibit anomalous dissipation. 

More precisely, we define the solution of the forced $3d$-Navier-Stokes system and the initial condition as
$$
v_\nu = \begin{pmatrix}
 u_\nu
\\
\tilde \vartheta_\nu
\end{pmatrix}\, , 
\qquad
v_{\initial} = \begin{pmatrix}
0 
\\ 
\vartheta_{\initial} 
\end{pmatrix}
\, ,
$$
where $ u_\nu$ is a suitable regularization of $u$ (to be defined in Section~\ref{sec:solution 3d NS}) and $\tilde \vartheta_{\nu}$ solves the advection-diffusion equation with velocity field $ u_\nu$ and initial datum $\vartheta_{\rm in}$, i.e.
\begin{align*}
\begin{cases}
\partial_t \tilde \vartheta_\nu +  u_\nu  \cdot \nabla \tilde \vartheta_\nu = \nu \Delta \tilde \vartheta_\nu,
\\
\tilde \vartheta_{\nu} (0, \cdot ) = \vartheta_{\initial} (\cdot )\, .
\end{cases}
\end{align*}
Since $u_\nu$ is also an alternating shear flow (see Section~\ref{sec:solution 3d NS}, the nonlinear term $ u_\nu \cdot \nabla  u_\nu$ vanishes identically and therefore the velocity field $v_\nu$ solves the forced $3d$-Navier-Stokes system with force
$$ 
F_\nu  = \begin{pmatrix}
\partial_t  u_\nu - \nu \Delta  u_\nu 
\\
0
\end{pmatrix}\, .
$$

By suitably setting the parameters in the construction of $u$, we will verify that 
\begin{equation}
    v_\nu \in L^3((0,1); C^\alpha(\T^3))\, , 
    \quad
    F_\nu \in L^{1+\sigma}((0,1); C^\sigma(\T^3))
    \qquad \text{uniformly in $\nu$,}
\end{equation}
for some $\sigma>0$, where $\alpha < 1/3$ is arbitrary. In order to show that $v_\nu$ exhibits anomalous dissipation, hence concluding the proof of Theorem~\ref{t_Onsager}, we employ \cite{CCS22}*{Theorem~A} to get
\begin{equation} \label{anomalous}
   \limsup_{\nu \downarrow 0} 2 \, \nu \int_0^1 \int_{\T^3} |\nabla v_\nu(s,x)|^2\, dx\, ds 
    \ge 
    \limsup_{\nu \downarrow 0}
    2 \, \nu \int_0^1 \int_{\T^3} |\nabla \tilde \vartheta_\nu(s,x)|^2\, dx\, ds > 1/2\, .
\end{equation}

To prove that the vanishing viscosity limit does not select a unique solution in the setting of
Theorem~\ref{t_selection2} and Theorem~\ref{t_selection} we use the corresponding statement in~\cite{CCS22}*{Theorem~B} which proves lack of selection for solutions of the advection-diffusion equations with velocity field $u$. More precisely, we prove that the first two components of $v_\nu$ (namely $u_\nu$) strongly converge in $L^2((0,2) \times \T^3)$ to a unique limit whereas the last component of $v_\nu$ (namely $\tilde \vartheta_\nu$) for a suitable choice of a sequence of viscosity parameters $\{ \tilde \nu_q \}_{q \in \N}$ exhibits anomalous dissipation~\eqref{anomalous} and for another suitable choice of a sequence of viscosity parameters~$\{ \nu_q \}_{q \in \N}$ converges strongly in $L^2((0,2) \times \T^3)$ to a conservative solution (i.e.~the limit satisfies the energy balance~\eqref{eq:conservative} with the first two components of the velocity field). 



\section{Construction and main properties of the \texorpdfstring{$2d$}{2d} velocity field}\label{section:parameters}

In this section we recall the main properties of the velocity field $u : [0,1] \times \T^2 \to \R^2$ constructed in~\cite{CCS22} and of the corresponding solution $\vartheta_\nu : [0,1] \times \T^2 \to \R$ of the advection-diffusion equation with velocity field $u$. This velocity field will be used as a building block for the construction of solutions to the forced $3d$-Navier-Stokes equations in Theorems~\ref{t_Onsager},~\ref{t_selection2} and~\ref{t_selection}.



\subsection{Choice of the parameters}\label{ss:para}

Let $\alpha \in (0,1)$ and $\beta  \in [0,\sfrac{1}{3})$ such that $\alpha + 2 \beta <1$.
We consider parameters $\epsilon , \delta \in (0,\sfrac14)$ sufficiently small such that 
\begin{subequations} \label{c:beta_eps_condition_all}
\begin{align} 
1 - \frac{2 \beta (1 + 3 \epsilon(1+ \delta) ) (1+ \delta)}{1 -  \delta} - \alpha (1 + \epsilon \delta)(1 + \delta)  - \frac{\delta}{8 }>0\,, \label{c:alpha_beta_eps_kappa}
\\
\frac{3 \beta (1 + 3\epsilon (1 + \delta)) (1 + \delta )}{1 -  \delta} + \frac{\delta}{8} < 1 \,, \label{c:gamma_eps}
\\
\epsilon \leq \frac{\delta^3}{ 50}\,.  \label{c:eps_delta}
\end{align} 
\end{subequations}
Furthermore we introduce the parameter $\gamma>0$ as
\begin{align} \label{d:gamma}
\gamma = \frac{3 \beta (1 + 3\epsilon (1 + \delta)) (1 + \delta ) }{1 - \delta} + \frac{ \delta}{8} <1 \,.
\end{align}

Given $a_0 \in (0,1)$ such that 
\begin{align} \label{c:d_0}
a_0^{\epsilon \delta^2} + a_0^{\sfrac{\epsilon \delta}{8} } \leq \frac{1}{20} \,,
\end{align}
we define
\begin{align}    \label{d:parameters}
  a_{q+1} = a_q^{1 + \delta}, \qquad
\lambda_q =  \frac{1}{2 a_q}.
\end{align}

\subsection{Construction of the velocity field}

Let us begin by introducing some notation. For any $f : [0,2] \times \T^2 \to \R^2$ we denote by $\supp_{T} (f)$ the temporal support of the function $f$, namely the projection on the time interval $[0,2]$ of the support of $f$. The precise definition is
$$
\supp_{T} (f) := \overline{ \{ t \in [0,2]: \text{ there exists } x \in \T^2 \text{ such that } f(t,x) \neq 0 \}} \, .
$$
Given $\{T_q \}_{q \in \N\cup\{-1\}}$, a decreasing sequence of non-negative numbers such that $T_{-1} = 1$ and $T_q \downarrow 0$ as~$q\to \infty$, we define the time intervals
$$ 
\mathcal{I}_q = [1- T_q, 1 - T_{q+1}] \, ,
\qquad \mathcal{J}_q = [1+T_{q+1}, 1 + T_{q}]  \, ,
\quad \text{for any $q\in \N \cup \{-1\}$}\, .
$$

The results below are taken from \cite{CCS22}.

\begin{prop} \label{proposition}
Let $ \alpha$, $\beta$, $\gamma$, $\epsilon$, $\delta$, and $\{ a_q \}_{q \in \N}$ as above. 
Then there exist a decreasing sequence of times
$\{T_q \}_{q \in \N\cup\{-1\}}$ satisfying $T_{-1} = 1$ and $T_q \downarrow 0$ as $q\to \infty$,
an initial datum $\vartheta_{\initial} \in C^\infty(\T^2)$ with $ \int_{\T^2} \vartheta_{\initial}=0$, and a divergence-free velocity field $u \in C^{\infty}_{\rm loc}(  ((0,2) \setminus \{ 1\}) \times \T^2 ; \R^2 )$, such that the following hold:

\begin{enumerate}

\item{(Reflection and shear flow)}\label{vectorfield_VV}
For any $t\in (0,2)$, $u(t,\cdot)$ coincides either with an horizontal shear flow, or with a vertical one. Moreover
$u(t, x) = - u(2-t,x)$ for any $t \in (1,2)$ and $x \in \T^2$.

\item{(Time intervals)}\label{prop:propertysupport} For any $q \in \N$ we have $| T_q - T_{q+1}| \leq 4 a_q^{\gamma - \gamma \delta}$, and
\begin{align} \label{p:support}
      \supp_T(u)\cap (\mathcal{I}_{-1}\cup \mathcal{J}_{-1}) &= \emptyset\, ,
      \\
     \left | \supp_T(u) \cap (\mathcal{I}_q \cup \mathcal{J}_q) \right | &\leq 6 a_q^\gamma .
\end{align}
Moreover, $u(t,\cdot) \equiv 0$ for any $t$ in a neighborhood of $1-T_q$ and $1+T_q$.

\item{(Regularity of the velocity field)} \label{prop:property1} For any $k \in \N$ and $\ell \in \N$ there exists a constant $C>0$ such that 
\begin{align} \label{s:vectorfield_I_q5_new}
  \| \partial_t^\ell \nabla^k u \|_{L^\infty (( {\mathcal{I}}_{q} \cup \mathcal{J}_{q}) \times \T^2)} \leq C a_q^{1 - \gamma} a_{q+1}^{-k(1 +  \epsilon \delta )} a_q^{- \ell \gamma}  \,,
\end{align}
for any $q\in \N$.

\item{(Regularity of the solution)}\label{prop:regularity-solution} 
For any $\nu>0$ there exists a unique bounded solution $\vartheta_\nu : [0,2] \times \T^2 \to \R$ of the advection-diffusion equation  
\begin{align} \label{e:advection-diffusion}
\partial_t \vartheta_\nu + u \cdot \nabla \vartheta_\nu = \nu \Delta \vartheta_\nu 
\end{align}
with initial datum $\vartheta_{\initial}$. For $\nu=0$, the advection equation (i.e.,~\eqref{e:advection-diffusion} with $\nu =0$) with velocity field~$u$ and initial datum $\vartheta_{\initial}$ has a unique bounded solution with the symmetry $\vartheta_0 (t,x) = \vartheta_0 (2-t, x)$ for any $t \in (1,2)$ and $x \in \T^2$. The family of solutions $\{\vartheta_\nu\}_{\nu \in [0,1]}$ satisfies
$$
\sup_{\nu \in [0,1]} \| \nabla \vartheta_\nu \|_{L^{\infty} (\mathcal{I}_q \times \T^2)} \leq \| \nabla \vartheta_{\initial}\|_{L^\infty} a_{q+1}^{-1 -3 \epsilon (1+ \delta)}\, ,
\quad \text{for any $q\in \N$.}
$$

\item{(Anomalous dissipation)}\label{prop:anomalous} 
For any $q\in \N$ we set
\begin{equation} \label{eq: tilde nu}
    {\tilde \nu}_q = a_q^{2 - \frac{\gamma}{1 + \delta} + 4 \epsilon} \, .
\end{equation}
There exists $m \in\N$ such that the sequence $\{\vartheta_{\tilde \nu_q} \}_{q \in \N}$  satisfies
\begin{equation}
    \, 2 \, \tilde{\nu}_q \int_0^{1- T_q + \overline{t}_q} \int_{\T^2} | \nabla \vartheta_{\tilde \nu_q}|^2 \, dx\,dt > \frac{1}{2}\, 
    \qquad \text{ for any $q \in m\N$,}
\end{equation}
where $\bar t_q \in (T_{q+1}, T_q)$ is a suitable intermediate time such that $\supp_{T}(u)\cap (1-T_q, 1-T_q + \bar t_q) = \emptyset$.

\end{enumerate}
\end{prop}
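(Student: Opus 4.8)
The plan is to quote the construction of \cite{CCS22}*{Section 4} essentially verbatim, since every conclusion listed here has an exact counterpart there; the role of this proposition is merely to fix notation and to record the precise quantitative estimates in a form convenient for the $2+\sfrac12$-dimensional lift carried out in the subsequent sections. First I would recall the inductive construction of the alternating shear flow: one builds $u$ on each $\mathcal I_q$ as a single "pipe flow" at frequency $\lambda_{q+1}$, smoothly switched on and off in time, with the orientation (horizontal/vertical) alternating from one $q$ to the next so that the solution of the transport equation mixes $\vartheta_{\initial}$ to ever-higher frequencies; the amplitude and time duration are tuned by the parameters $a_q, \epsilon, \delta, \gamma$ fixed in Section \ref{ss:para}. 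The reflection symmetry $u(t,x) = -u(2-t,x)$ on $(1,2)$ is imposed by definition, extending $u$ from $[0,1]$, and it is this odd reflection that forces the companion symmetry $\vartheta_0(t,x) = \vartheta_0(2-t,x)$ of the inviscid solution, because reversing time and flipping the sign of the velocity leaves the transport equation invariant. The bound $|T_q - T_{q+1}| \le 4 a_q^{\gamma - \gamma\delta}$ and the temporal-support estimate $|\supp_T(u) \cap (\mathcal I_q \cup \mathcal J_q)| \le 6 a_q^\gamma$ come directly from the choice of $T_q$ in \cite{CCS22}, together with the fact that $u$ is supported in a strictly smaller subinterval of $\mathcal I_q$, leaving a gap $(1 - T_q, 1-T_q + \bar t_q)$ free of support as asserted in item \eqref{prop:anomalous}.

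Next I would establish the regularity estimate \eqref{s:vectorfield_I_q5_new}. On $\mathcal I_q$ the field $u$ has spatial frequency $\sim \lambda_{q+1} \sim a_{q+1}^{-1}$, so each spatial derivative costs a factor $a_{q+1}^{-(1+\epsilon\delta)}$ (the correction $\epsilon\delta$ absorbs the mollification scale), while the time cutoff lives on a scale $\sim a_q^\gamma$, so each time derivative costs $a_q^{-\gamma}$; the overall amplitude is $\sim a_q^{1-\gamma}$, chosen precisely so that $u \in L^3_t C^\alpha_x$ uniformly once \eqref{c:alpha_beta_eps_kappa}–\eqref{c:eps_delta} hold. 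For item \eqref{prop:regularity-solution}, existence and uniqueness of the bounded solution $\vartheta_\nu$ for $\nu > 0$ is standard parabolic theory since $u$ is smooth and divergence-free away from $t=1$ and bounded up to $t=1$; for $\nu = 0$ one uses that $u$, being a shear flow on each time slice, has a well-defined (measure-preserving) flow map, giving a unique bounded distributional solution, and the symmetry $\vartheta_0(t,x)=\vartheta_0(2-t,x)$ follows from the velocity reflection exactly as above. The gradient growth bound $\|\nabla\vartheta_\nu\|_{L^\infty(\mathcal I_q\times\T^2)} \le \|\nabla\vartheta_{\initial}\|_{L^\infty} a_{q+1}^{-1-3\epsilon(1+\delta)}$ is the heart of the matter and is proved in \cite{CCS22} by an inductive argument: on each $\mathcal I_q$ the shear flow amplifies the gradient of the advected scalar by a factor comparable to (amplitude)$\times$(duration)$\times$(frequency), and the parameters are calibrated so that this compounding is dominated by $a_{q+1}^{-1-3\epsilon(1+\delta)}/a_q^{-1-3\epsilon(1+\delta)} = a_q^{-\delta(1+3\epsilon(1+\delta))}$ at step $q$; the viscous term only helps (it cannot increase the $L^\infty$ norm of $\vartheta$, and one checks it does not spoil the gradient estimate on the relevant time scale because $\tilde\nu_q$ is chosen so small that the diffusive length $\sqrt{\tilde\nu_q\, |\mathcal I_q|}$ is much smaller than $a_{q+1}$).

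Finally, item \eqref{prop:anomalous} is \cite{CCS22}*{Theorem A} specialized to this construction: with $\tilde\nu_q$ as in \eqref{eq: tilde nu} one shows that up to time $1 - T_q + \bar t_q$ the scalar $\vartheta_{\tilde\nu_q}$ has been mixed down to scale $\sim a_q$ — comparable to the diffusive scale at that viscosity — so that a definite fraction of the initial energy has been transferred to dissipation, giving $2\tilde\nu_q \int\int |\nabla\vartheta_{\tilde\nu_q}|^2 > \sfrac12$ along the subsequence $q \in m\N$ for a suitable period $m$ (the period $m$ enters because the mixing must accumulate over several stages before the diffusive and advective scales become comparable). The only genuine work is bookkeeping: verifying that the parameter inequalities \eqref{c:beta_eps_condition_all} and the definition \eqref{d:gamma} of $\gamma$ are exactly what is needed to make $u \in L^3_t C^\alpha_x$, $\partial_t u - \nu\Delta u$ land in $L^{1+\sigma}_t C^\sigma_x$ (this last point is deferred to later sections), and the gradient bound of item \eqref{prop:regularity-solution} close inductively — and to see that the choice $\tilde\nu_q = a_q^{2 - \gamma/(1+\delta) + 4\epsilon}$ sits in the narrow window between "too large to dissipate a fixed fraction" and "too small to have dissipated anything by time $1 - T_q + \bar t_q$." I expect the main obstacle, were one to reproduce the proof from scratch rather than cite it, to be exactly this last calibration together with the inductive gradient estimate in item \eqref{prop:regularity-solution}, since that is where the viscous and inviscid dynamics interact and where the sharpness of the $C^{\sfrac13-}$ threshold is genuinely used.
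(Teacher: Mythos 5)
Your proposal takes exactly the same route as the paper: the paper's proof of this proposition is a direct citation of the construction in \cite{CCS22}*{Section~4} (with the choice $p=p^\circ=\sfrac{1}{3}$), attributing items (1)--(2) to that construction, item (3) to \cite{CCS22}*{Remark~4.2}, item (4) to \cite{CCS22}*{Section~8}, and item (5) to \cite{CCS22}*{Theorem~A}. Your additional heuristic commentary on the mechanism behind each estimate is consistent with that source and does not diverge from the paper's argument.
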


\begin{proof}
The velocity field with all the above properties
is obtained from the one constructed in~\cite{CCS22}*{Section~4} choosing $p=p^\circ=\sfrac{1}{3}$.
Properties~\eqref{vectorfield_VV} and~\eqref{prop:propertysupport} are a direct consequence of the construction in \cite{CCS22}*{Section~4}. Property~\eqref{prop:property1} is given in~\cite{CCS22}*{Remark~4.2}. 
 Property~\eqref{prop:regularity-solution} has been proved in~\cite{CCS22}*{Section~8}. Property~\eqref{prop:anomalous} has been proved in~\cite{CCS22}*{Section~7} and it is stated in~\cite{CCS22}*{Theorem~A}. 
\end{proof}

\section{Solution of the forced \texorpdfstring{$3d$}{3d}-Navier-Stokes and Euler equations}
\label{sec:solution 3d NS}

Let $ \alpha$, $\beta$, $\gamma$, $\epsilon$, $\delta$, and $\{ a_q \}_{q \in \N}$ be as in Section~\ref{ss:para}. We employ the velocity field $u$ and the initial condition $\vartheta_{\rm in}$ built in Proposition \ref{proposition} to produce $(v_\nu, p_\nu, F_\nu)$ a smooth solution to the forced $3d$-Navier-Stokes equations \eqref{e:NSE}-\eqref{e:Cauchy}.

For any $q\in \N$, we introduce the closed set $K_q = [0,1-T_q]\cup [1+T_q,2] $ and define
\begin{equation}\label{e:defuq}
     u_q (t,x ) = u(t,x) \mathbbm{1}_{K_q} (t)\, .
\end{equation}
We observe that $u_q$ is smooth for any $q \in \N$.

We consider the family of viscosity parameters ${\tilde \nu}_q$ defined in~\eqref{eq: tilde nu}.
For any $\nu \in (0,a_0^2)$ there exists $q\in \N$ such that $\nu \in ( \tilde \nu_{q+1}, \tilde \nu_q]$.
Let $\tilde \vartheta_{\nu} : [0,2] \times \T^3 \to \R$ be the unique smooth solution to the advection-diffusion equation \eqref{e:advection-diffusion} with diffusion parameter $\nu$, initial datum $\vartheta_{\initial}$, and velocity field $ u_q (t,x )$, 
i.e.
\begin{align*}
\partial_t \vt_{\nu} + {u}_{q} \cdot \nabla \vt_{\nu} = \nu \Delta \vt_{ \nu} \, .
\end{align*} 
We define smooth functions $F_{\nu}, v_\nu : [0,2] \times \T^3 \to \R^3$ and $p_\nu : [0,2] \times \T^3 \to \R$ as 
\begin{align*}
F_{\nu}(t,x) 
    & = \begin{pmatrix}
\partial_t  u_q (t,x) - \nu \Delta  u_q (t,x)
\\
0
\end{pmatrix}
\\
v_{\nu} (t,x ) & = \begin{pmatrix}
   u_q(t,x)
 \\
 \vt_{\nu} (t,x)
\end{pmatrix}
\\p_{\nu} &= 0\, .
\end{align*}
Finally, we set
\begin{align} \label{initialdatum_NS}
 v_{\initial } = \begin{pmatrix}
0 
\\
 \vartheta_{\initial}
\end{pmatrix}\, .
\end{align}
Given Proposition \ref{proposition}, the following lemma is immediately checked.

\begin{lemma}\label{lemma:3dNS}
   For any $\nu\in (0,a_0^2)$, given $F_{\nu}$ as above, $(v_\nu, p_\nu)$ is the unique smooth solution to \eqref{e:NSE} with initial datum $v_{\rm in}$. Moreover, at any time $t\in (0,2)$ the velocity field 
    $v_\nu$ is an alternating shear flow on the first two components, i.e. 
$$v_{\nu} (t,x)  = \begin{pmatrix}
w^1_{\nu} (t, x_2)
\\
0 
\\
* 
\end{pmatrix} \qquad \mbox{ or } \qquad  v_{\nu} (t,x)
= \begin{pmatrix}
0
\\
w^2_{\nu} (t, x_1)
\\
* 
\end{pmatrix}
$$
for suitable one-dimensional functions $w_{\nu}^i: [0,2] \times \T \to \R$, for $i=1,2$.   
\end{lemma}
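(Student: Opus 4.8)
The plan is to verify directly that the triple $(v_\nu, p_\nu, F_\nu)$ constructed above satisfies the Navier--Stokes system and the initial condition, then to invoke smoothness together with the classical well-posedness theory to conclude uniqueness. First I would check the divergence-free condition: since $u_q(t,\cdot)$ is an (alternating) shear flow, $\dv_{x_1,x_2} u_q = 0$, and the third component $\vt_\nu$ does not depend on $x_3$ (the velocity field $u_q$ has no $x_3$-dependence and $\vartheta_{\initial}$ is a function on $\T^2$, hence $\vt_\nu = \vt_\nu(t,x_1,x_2)$), so $\partial_{x_3} \vt_\nu = 0$ and $\dv_x v_\nu = 0$. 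Next, the key algebraic simplification: because $u_q(t,\cdot)$ is a shear flow, the advective term $(u_q \cdot \nabla) u_q$ vanishes identically (if $u_q = (w(t,x_2),0)$ then $u_q \cdot \nabla = w \partial_{x_1}$, which annihilates $w(t,x_2)$, and symmetrically in the vertical case). Consequently the first two components of $\partial_t v_\nu + (v_\nu \cdot \nabla) v_\nu - \nu \Delta v_\nu$ reduce to $\partial_t u_q - \nu \Delta u_q$, which is exactly the first two components of $F_\nu$, with $\nabla p_\nu = 0$. For the third component we have $v_\nu \cdot \nabla \vt_\nu = u_q \cdot \nabla \vt_\nu$ (again the $x_3$-derivative term drops since $\vt_\nu$ is $x_3$-independent), so the third equation reads $\partial_t \vt_\nu + u_q \cdot \nabla \vt_\nu - \nu \Delta \vt_\nu = 0$, which is precisely the advection-diffusion equation defining $\vt_\nu$; the third component of $F_\nu$ is zero, consistent. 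The initial condition $v_\nu(0,\cdot) = v_{\initial}$ holds because $u_q(0,\cdot) = u(0,\cdot)\mathbbm{1}_{K_q}(0) = 0$ (note $0 \in K_q = [0,1-T_q]\cup[1+T_q,2]$ and $\supp_T(u) \cap \mathcal I_{-1} = \emptyset$ with $\mathcal I_{-1} = [0, 1-T_0]$ forcing $u \equiv 0$ near $t=0$), and $\vt_\nu(0,\cdot) = \vartheta_{\initial}$ by construction.

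For smoothness, I would note that $u_q = u \cdot \mathbbm{1}_{K_q}$ is smooth because $u(t,\cdot) \equiv 0$ in a neighborhood of $t = 1 - T_q$ and $t = 1 + T_q$ (the endpoints where the indicator of $K_q$ jumps), so multiplying by the characteristic function of $K_q$ produces a $C^\infty$ field; all its space and time derivatives are continuous across those transition times. Hence $F_\nu = (\partial_t u_q - \nu \Delta u_q, 0)$ is smooth, and $\vt_\nu$, being the solution of a linear parabolic equation with smooth coefficients and smooth data on a compact domain, is smooth as well; therefore $v_\nu$ is smooth on $[0,2]\times\T^3$. The alternating shear flow structure of $v_\nu$ on its first two components is inherited verbatim from that of $u_q$, which is stated in Proposition~\ref{proposition}\eqref{vectorfield_VV}; this gives the displayed dichotomy with $w^i_\nu(t,\cdot)$ depending on a single spatial variable.

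Finally, uniqueness: since $v_\nu$ is a smooth (hence in particular $L^\infty_t H^s_x$ for every $s$, or simply a classical) solution of \eqref{e:NSE} with the given smooth force and smooth initial datum, it is the unique solution in the Leray class by the standard weak-strong uniqueness theorem for Navier--Stokes at fixed positive viscosity. I do not expect any genuine obstacle here; the only point requiring a little care is the smoothness of $u_q$ at the gluing times $1 \mp T_q$, which is precisely guaranteed by the last clause of Proposition~\ref{proposition}\eqref{prop:propertysupport} that $u(t,\cdot) \equiv 0$ near $1 - T_q$ and $1 + T_q$, and a second small point is observing that $\vt_\nu$ inherits $x_3$-independence from the data and the coefficient field so that the third equation genuinely decouples into the two-dimensional advection-diffusion equation. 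Everything else is a direct substitution.
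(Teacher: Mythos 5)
Your verification is correct and is exactly the direct check the paper has in mind when it states that, given Proposition~\ref{proposition}, the lemma is ``immediately checked'': the shear-flow structure kills the advective term, the $x_3$-independence decouples the third equation into the $2d$ advection-diffusion equation, smoothness of $u_q$ follows from $u$ vanishing near $1\mp T_q$, and uniqueness is classical for smooth solutions at fixed $\nu>0$. No gaps.
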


At least formally, we expect $(v_\nu,p_\nu, F_\nu)$ to converge to a solution of the forced $3d$-Euler equations~\eqref{e:E}-\eqref{e:Cauchy}
when $\nu\downarrow 0$. We will prove in the next sections that this is the case under suitable assumptions and that 
\begin{align}\label{F0}
F_{0}(t,x) 
    & = 
    \begin{pmatrix}
\partial_t u (t,x) 
\\
0
\end{pmatrix}
\\
p_{0} &= 0\, .
\end{align}
The following lemma immediately follows from the regularity of $u$ in $(0,1)\times \T^2$ in Proposition \ref{proposition}. 

\begin{lemma}\label{lemma:Euler(0,1)}
    Let $u$, $\vartheta_0$ be as in Proposition \ref{proposition}, and let $F_0$ be as in \eqref{F0}.
    We have that
    \begin{equation}
        F_0\in C^\infty ((0,1)\times \T^3)\, .
    \end{equation}
    Moreover, 
    \begin{equation}
    v_{0} (t,x ) 
    := \begin{pmatrix}
    u (t,x)
   \\
    \vartheta_{0} (t,x)
    \end{pmatrix} \, ,
    \quad t\in (0,1), \, x\in \T^3\, ,
    \end{equation}
    is the unique smooth solution to \eqref{e:E}-\eqref{e:Cauchy} in $(0,1)\times \T^3$ with initial datum \eqref{initialdatum_NS}.
\end{lemma}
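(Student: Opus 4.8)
The plan is to read both assertions directly off the structural properties of $u$ and $\vartheta_0$ collected in Proposition~\ref{proposition}, the only step needing a genuine argument being the uniqueness. For the smoothness statement: since $u\in C^\infty_{\rm loc}(((0,2)\setminus\{1\})\times\T^2)$ and $F_0=(\partial_t u,0)$ is independent of $x_3$, one gets $F_0\in C^\infty((0,1)\times\T^3)$ at once; and since $u$ is a smooth divergence-free field on $\T^2$ which is bounded on every compact subinterval of $(0,1)$, its flow $X_{t,s}$ is globally defined and smooth there, so the unique bounded solution of~\eqref{e:advection-diffusion} with $\nu=0$ and datum $\vartheta_{\initial}$ is $\vartheta_0(t,\cdot)=\vartheta_{\initial}\circ X_{0,t}\in C^\infty((0,1)\times\T^2)$, whence $v_0=(u,\vartheta_0)^{\mathsf T}\in C^\infty((0,1)\times\T^3)$. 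Finally, by~\eqref{prop:propertysupport} and $T_{-1}=1$ the interval $\mathcal I_{-1}=[0,1-T_0]$ is disjoint from $\supp_T(u)$, so $u\equiv 0$ and hence $\vartheta_0\equiv\vartheta_{\initial}$ and $v_0\equiv v_{\initial}$ on $[0,1-T_0]$; in particular $v_0$ extends smoothly up to $t=0$ and attains the datum~\eqref{initialdatum_NS}.

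Next I would check that $v_0$, with pressure $p_0\equiv 0$, solves~\eqref{e:E}. The incompressibility is immediate, $\diver v_0=\diver_{\T^2}u+\partial_{x_3}\vartheta_0=0$, because $u$ is divergence-free on $\T^2$ and $\vartheta_0$ is independent of $x_3$. For the momentum balance I would use~\eqref{vectorfield_VV}: at each time $t$, $u(t,\cdot)$ equals either $(W(t,x_2),0)$ or $(0,W(t,x_1))$, so the transport operator $v_0\cdot\nabla=u\cdot\nabla_{\T^2}+\vartheta_0\,\partial_{x_3}$ annihilates both components of $u$ (in each case one component vanishes and the other is constant along the flow of $u$); hence the first two components of $\partial_t v_0+v_0\cdot\nabla v_0$ reduce to $\partial_t u$, which are exactly the first two components of $F_0$, while the third reduces to $\partial_t\vartheta_0+u\cdot\nabla_{\T^2}\vartheta_0$, which vanishes by the $\nu=0$ advection equation of Proposition~\ref{proposition}\eqref{prop:regularity-solution}, matching the vanishing third component of $F_0$. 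Together with the initial condition from the first step, this shows $(v_0,p_0,F_0)$ solves~\eqref{e:E}-\eqref{e:Cauchy} on $(0,1)\times\T^3$.

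For uniqueness, let $(\bar v,\bar p)$ be any smooth solution of~\eqref{e:E}-\eqref{e:Cauchy} on $(0,1)\times\T^3$ with force $F_0$, attaining $v_{\initial}$ at $t=0$, and set $w=v_0-\bar v$. Writing the difference of the momentum equations as $\partial_t w+(\bar v\cdot\nabla)w+(w\cdot\nabla)v_0+\nabla(p_0-\bar p)=0$, pairing with $w$ in $L^2(\T^3)$, and using $\diver w=\diver\bar v=0$ to kill the transport and pressure terms, we obtain $\frac12\frac{d}{dt}\|w\|_{L^2}^2=-\int_{\T^3}(w\cdot\nabla)v_0\cdot w\,dx\le\|\nabla v_0(t)\|_{L^\infty}\|w\|_{L^2}^2$. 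For any $\eta\in(0,1)$ the interval $[0,1-\eta]$ meets only finitely many of the intervals $\mathcal I_q$, on each of which $\|\nabla v_0\|_{L^\infty}$ is finite (by smoothness, and quantitatively by~\eqref{prop:property1} and~\eqref{prop:regularity-solution}), so $\int_0^{1-\eta}\|\nabla v_0\|_{L^\infty}\,dt<\infty$ and Gr\"onwall, starting from $w(0)=0$, forces $w\equiv 0$ on $[0,1-\eta]$; letting $\eta\downarrow 0$ gives $v_0\equiv\bar v$ on $(0,1)\times\T^3$, and then $p_0-\bar p$ depends on $t$ only and can be normalized to $0$. Equivalently, one may invoke the classical local well-posedness and uniqueness of the forced incompressible Euler equations in $C^\infty$, applied on each slab $[0,1-\eta]\times\T^3$. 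Nothing in this lemma is deep; the only care needed is to fix the sense in which a solution on the open slab attains the initial datum, and to note that, although $\|\nabla v_0\|_{L^\infty}$ fails to be integrable up to $t=1$ (the field degenerates there), it is integrable up to every $t<1$ — which is exactly what the Gr\"onwall step requires, consistent with the statement being made only on $(0,1)$.
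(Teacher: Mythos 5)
Your proposal is correct and fills in precisely what the paper leaves implicit: the paper offers no proof beyond asserting that the lemma ``immediately follows from the regularity of $u$ in $(0,1)\times\T^2$,'' and your verification (smoothness of $F_0$ and of the flow on each slab $[0,1-\eta]$, the shear-flow structure killing $u\cdot\nabla u$, the transport equation for $\vartheta_0$, and the standard Gr\"onwall energy argument for uniqueness of smooth solutions) is the intended routine check.
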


\begin{remark}
    We will see in the next sections that uniqueness for~\eqref{e:E}-\eqref{e:Cauchy} may fail past time $t=1$, where the singularity of $F_0$ appears.
\end{remark}

\section{Proof of Theorem~\ref{t_Onsager} and Theorem~\ref{t_selection2}} \label{section:proofOnsager}

Let $\alpha \in [0, 1/3)$ be fixed as in Theorem \ref{t_Onsager} and $\alpha ' \in [0,1)$ be fixed as in Theorem \ref{t_selection2}. Without loss of generality and up to increasing $\alpha$ or $\alpha'$, we can assume $\alpha ' = 3 \alpha$. We fix $\beta = \alpha$ and choose the parameters~$\epsilon$,~$\delta$, and $\{ a_q \}_{q \in \N}$ as in Section \ref{ss:para}. The parameter $\gamma$ is then determined by~\eqref{d:gamma}. The viscosity parameter ${\tilde \nu}_q$ has been chosen in~\eqref{eq: tilde nu}. Let $(v_\nu, p_\nu, F_\nu)$  be the solution to \eqref{e:NSE}, with initial datum as in \eqref{initialdatum_NS}, built in Section~\ref{sec:solution 3d NS}. 

In order to prove Theorem~\ref{t_Onsager} and Theorem~\ref{t_selection2} we need to show the following facts:

\begin{itemize}

\item[(i)] There exists $\sigma>0$ such that
\begin{align} 
\sup_{\nu \in (0,a_0^2)}
\| v_\nu \|_{L^{3} ([0,1]; C^\alpha(\T^3))}  
+  \| v_\nu \|_{L^{\infty } ([0,2] \times \T^3)}
+ \| F_\nu \|_{L^{1+ \sigma} ([0,2]; C^{\sigma} (\T^3))}
< \infty \,.
 \end{align}
Moreover, $F_\nu \to F_0$ in $L^{1+\sigma} ([0,2]; C^{\sigma} (\T^3))$.

\item[(ii)] Let $v_0$ be as in Lemma \ref{lemma:Euler(0,1)}. We have that $v_\nu \to v_0$ in $L^2((0,1)\times \T^3)$ as $\nu \to 0$.

\item[(iii)] There exist $v_0^{\rm ds}\in L^\infty([0,2]\times \T^3)$ solution to \eqref{e:E} with initial datum \eqref{initialdatum_NS} and a sequence $q_k\to \infty$, such that
$v_{\tilde \nu_{q_k}} \to v_0^{\rm ds}$ weakly in $L^2([0,2]\times \T^3)$. Moreover,
\begin{equation} 
2 \, \tilde \nu_{q_k} \int_0^1 \int_{\T^3} | \nabla v_{\tilde \nu_{q_k}}|^2 \, dx\,dt > 1/2 
\qquad \text{for any $k\in\N$.}
\end{equation}
In particular $v_0^{\rm ds}$ is an admissible dissipative solution of \eqref{e:E}.

\item[(iv)] Set $\nu_q =a_q^{2 - \gamma + \delta + 8 \epsilon}$.
There exists $v_0^{\rm cs}\in L^1((0,2); C^{\alpha'}(\T^3))$, a conservative (admissible) solution to~\eqref{e:E} with initial datum \eqref{initialdatum_NS}, such that $v_{\nu_q} \to v_0^{\rm cs}$ strongly in $L^2((0,2)\times \T^3)$ as $q \to \infty$.
\end{itemize}

\medskip

\noindent{\bf Proof of (i).} From~\eqref{s:vectorfield_I_q5_new} and the maximum principle for the advection-diffusion equation (using that the initial datum is bounded), we deduce that 
$$ 
\sup_{\nu \in (0,a_0^2)}   \| v_\nu \|_{L^{\infty } ([0,2] \times \T^3)} < \infty \, .
$$
Let us now check that
\begin{align} 
\sup_{\nu \in (0,a_0^2)}
\| v_\nu \|_{L^{3} ([0,1]; C^\alpha(\T^3))}  
< \infty \, .
 \end{align}
This is a consequence of 
\begin{equation}\label{z5}
u \in L^3 ((0,1); C^{\alpha}(\T^3))
\qquad\text{ and }\qquad
\sup_{\nu \in (0,a_0^2)} \| \vartheta_\nu \|_{L^{3} ((0,1); C^\alpha(\T^2))}  < \infty 
\end{equation}
that we now prove. 
Indeed, $\tilde \vartheta_\nu \equiv \vartheta_\nu$ in $[0, 1 -T_q] \times \T^3$ since $ u_q \equiv u$ in $[0, 1 -T_q] \times \T^3$, while~$\tilde \vartheta_\nu(t,\cdot)$ solves the heat equation for $t \in [1-T_q, 1]$, and the H\"older norm is nonincreasing for solutions of the heat equation. 

Let us begin by proving the first property in~\eqref{z5}. By~\eqref{p:support} and~\eqref{s:vectorfield_I_q5_new} and by interpolation we have
\begin{align*}
\| u \|_{L^3((0,1);C^\alpha)}^3 & = \sum_{q=0}^\infty \int_{{\mathcal{I}}_q} \|u (s,\cdot)\|_{C^\alpha(\T^2)}^{3}  ds 
\leq  \sum_{q=0}^\infty \int_{{\mathcal{I}}_q} \|u (s,\cdot)\|_{L^\infty(\T^2)}^{3 (1- \alpha)}   \|u (s,\cdot)\|_{W^{1,\infty}(\T^2)}^{3 \alpha}   ds 
\\
& \le C\sum_{q=0}^\infty a_q^{ \gamma}  a_q^{-3(1- \alpha)(\gamma -1)  }  a_q^{- 3 \alpha (\gamma -1)  } a_{q+1}^{-3 \alpha (1 + \epsilon \delta)} 
\end{align*}
and the sum is finite if and only if 
$$ \frac{\gamma}{3} + 1 - \gamma - \alpha (1 + \epsilon \delta) (1 + \delta) >0\, ,
$$
which holds thanks to the choice \eqref{d:gamma} and the condition \eqref{c:alpha_beta_eps_kappa}.

Let us show the second property in~\eqref{z5}. 
Fix $\nu \in (0,a_0^2)$ and correspondingly let $q\in \N$ such that $\nu \in ( \tilde \nu_{q+1}, \tilde \nu_q]$.
Thanks to property \eqref{prop:regularity-solution} of Proposition \ref{proposition} and using $\alpha = \beta$, we get
 \begin{align*}
\| \vartheta_\nu \|_{L^{3 }((0,1); C^\alpha)}^{3} 
&= \sum_{q=0}^\infty \int_{{\mathcal{I}}_q} \| \vartheta_\nu (s, \cdot ) \|_{ C^\beta}^{3}  ds 
\le  C\sum_{q=0}^{\infty} a_q^{ \gamma -  \gamma \delta} a_{q+1}^{-3( \beta + 3\beta  \epsilon (1+ \delta))} 
\\
& =  C\sum_{q=0}^{\infty} a_q^{ \gamma -  \gamma \delta - 3( \beta + 3\beta  \epsilon (1+ \delta))(1 + \delta)} 
\end{align*}
and the sum is finite and independent of $\nu$ since as a consequence of \eqref{d:gamma} we have
$$
- \gamma (1 -  \delta) + 3( \beta + 3\beta  \epsilon (1+ \delta))(1 + \delta) <0 \,.
$$

We finally prove that
\begin{equation}
    F_\nu \in L^{1+\sigma}((0,2); C^\sigma(\T^3))\, , \quad \text{uniformly in $\nu\in (0,a_0^2)$}\, ,
\end{equation}
for some $\sigma>0$, and $F_\nu \to F_0$ in $L^{1+\sigma}((0,2); C^\sigma(\T^3))$ as $\nu \to 0$.
To this aim, it is enough to show that there exists $C>0$ such that
for any $ \nu \in (\tilde \nu_{q+1}, \tilde  \nu_q]$ we have
\begin{equation}\label{e:regforce1}
\| \partial_t u \|_{L^{1+ \sigma}((0,2); C^\sigma ( \T^3))} \leq C 
\qquad  \text{ and } \qquad
\| \nu \Delta u \|_{L^{1+ \sigma}((K_q; C^\sigma( \T^3))} \leq C a_q^\epsilon \, \, .
\end{equation}
and that
\begin{equation}\label{e:zeroforce1}
\| \partial_t u \|_{L^{1+ \sigma}(K_q^c; C^\sigma ( \T^3))} \to 0
\qquad \text{ as $q \to \infty$.}
\end{equation}
For the first property in~\eqref{e:regforce1}, thanks to \eqref{s:vectorfield_I_q5_new} we have
\begin{align*}
\| \partial_t u \|_{L^{1+ \sigma}((0,2); C^\sigma( \T^3))}^{ 1+ \sigma} 
& \leq \sum_{j=0}^{\infty} \int_{\mathcal{I}_j\cup \mathcal{J}_j}  \left ( \| \partial_t u (s, \cdot) \|_{L^\infty (\T^3)}^{(1- \sigma)} \| \partial_t u (s, \cdot) \|_{W^{1,\infty} (\T^3)}^{\sigma} \right )^{1+ \sigma} ds
\\
& \leq C  \sum_{j=0}^{\infty} \int_{\mathcal{I}_j\cup \mathcal{J}_j}  \left ( a_j^{1-  2 \gamma } a_{j+1}^{- \sigma (1+ \epsilon \delta)} \right )^{1+ \sigma} ds 
\\&
\leq 4 C \sum_{j=0}^{\infty}  a_j^{\gamma} a_j^{(1+ \sigma)(1- 2  \gamma - \sigma (1+ \delta) (1+ \epsilon \delta ))  } < \infty
\end{align*} 
where we used that $1- \gamma >0 $, and we choose $\sigma>0$ sufficiently small such that $\gamma + (1+ \sigma)(1-  2 \gamma - \sigma (1+ \delta) (1 + \epsilon \delta ) )>0$. Property~\eqref{e:zeroforce1} follows by noticing that
$$
\| \partial_t u \|_{L^{1+ \sigma}(K_q^c; C^\sigma( \T^3))}^{ 1+ \sigma} 
\leq \sum_{j=q}^{\infty} \int_{\mathcal{I}_j\cup \mathcal{J}_j}  \left ( \| \partial_t u (s, \cdot) \|_{L^\infty (\T^3)}^{(1- \sigma)} \| \partial_t u (s, \cdot) \|_{C^1 (\T^3)}^{\sigma} \right )^{1+ \sigma} ds
\quad \to \quad  0 
\qquad \text{ as $q \to \infty$.}
$$
For the second property in~\eqref{e:regforce1}, thanks to \eqref{s:vectorfield_I_q5_new}, we have
\begin{align*}
\| \nu \Delta u \|_{L^{1+ \sigma}(K_q; C^\sigma( \T^3))}^{1+ \sigma} & \leq \tilde \nu_q \sum_{j=0}^{q-1} \int_{\mathcal{I}_j\cup \mathcal{J}_j} \left (  \| \Delta u (s, \cdot) \|_{L^\infty (\T^3)}^{1- \sigma } \| \Delta u (s, \cdot) \|_{W^{1,\infty} (\T^3)}^{ \sigma } \right )^{1+ \sigma} ds
\\
& \leq C a_q^{2 - \frac{\gamma}{1 + \delta} + 4 \epsilon}  \sum_{j=0}^{q-1} \int_{\mathcal{I}_j\cup \mathcal{J}_j}  \left ( a_j^{1- \gamma} a_{j+1}^{-2 (1 +  \epsilon \delta) (1- \sigma) }  a_{j+1}^{-3 (1 +  \epsilon \delta) \sigma  } \right)^{1+ \sigma}  ds 
\\
& \leq C  a_q^{2 - \frac{\gamma}{1 + \delta} + 2 \epsilon}  \sum_{j=0}^{q-1}   a_j^\gamma \left ( a_j^{ - 1  - 2 \delta - \gamma}   a_j^{- \sigma (1+ \epsilon \delta )(1+ \delta) } \right )^{1 + \sigma}
\\
& \leq C q a_q^{2 \epsilon} a_{q-1}^{2 + 2 \delta - \gamma} a_{q-1}^{\gamma}  a_{q-1}^{- (1+\sigma)(1 + 2 \delta + \gamma + \sigma (1+ \epsilon \delta )(1+ \delta) )} 
\leq C a_q^{\epsilon} ,
\end{align*}
where we used that $q a_q^{\epsilon} \leq 1$, $a_{j+1} = a_j^{1+ \delta}$, $1 - \gamma >0$ and we choose $\sigma >0$ sufficiently small to guarantee that~$2 + 2 \delta - (1+ \sigma)( 1 + 2 \delta + \gamma + \sigma (1+ \epsilon \delta )(1+ \delta)  ) >0$.

\medskip

\noindent{\bf Proof of (ii).} Recalling~\eqref{e:defuq}, it suffices to prove that $\vt_{\nu} \to \vartheta_0$ in $L^2((0,1) \times \T^3)$, as $\nu \to 0$.
Fix~$\nu \in (0,a_0^2)$, and let $q\in \N$ such that $\nu \in (\tilde \nu_{q+1}, \tilde \nu_q]$.
We employ a standard vanishing viscosity estimate. For any~$0 \le t\le 1-T_q$, we have that $ u_q = u$, hence
$$
\partial_t (\vt_{\nu} -  \vartheta_0) + u \cdot  \nabla (\vt_{\nu} - \vartheta_0) = \nu \Delta \vt_\nu \, \qquad \text{ for any } 0 \leq t \leq 1-T_q.
$$
We multiply the above equation by $\vt_{\nu} - \vartheta_0$ and integrate in space-time to get
\begin{align*}
\| \vt_{\nu} (t, \cdot) - \vartheta_0 (t, \cdot) \|_{L^2 (\T^3)}^2 
& \leq \nu \left | \int_0^t \int_{\T^3} \nabla \vt_\nu (s, x) \cdot \nabla \vartheta_0 (s,x) dx ds \right |
\\
& \leq   
\left( \nu \int_0^t \int_{\T^3} \left | \nabla \vt_\nu (s, x) \right |^2 dx ds \right )^{1/2}  \left( \nu \int_0^t \int_{\T^3} \left | \nabla \vartheta_0 (s, x) \right |^2 dx ds \right )^{1/2}\, .
\end{align*}
We observe that by the energy equality
\begin{equation}\label{z2}
\nu \int_0^t \int_{\T^3} \left | \nabla \vt_\nu (s, x) \right |^2 dx ds   \leq 1 \qquad \text{for any $t\in [0,1]$}\, .
\end{equation}
Let us define $t(\nu):= 1 - T_{k(q)} \le 1 - T_q$, where $k(q)$ is the largest natural number satisfying 
$$ 
 a_q^{2- \frac{\gamma}{1+ \delta}}  \exp \left( a_{k(q)}^{2-2\gamma} a_{k(q) + 1}^{-2 - 2\epsilon\delta } \right)  \leq 1\, .
$$
We claim that 
\begin{equation}\label{z3}
\nu \int_0^{t(\nu)} \int_{\T^3} \left | \nabla \vartheta_0 (s, x) \right |^2 dx ds \to 0 
\qquad \text{ and } \qquad
t(\nu) \to 1
\qquad \mbox{ as } \nu \to 0\, .
\end{equation}
This follows by Gr\"onwall inequality and~\eqref{s:vectorfield_I_q5_new}, since
\begin{align*}
\nu \int_0^{t(\nu)} \| \nabla \vartheta_0 (s, \cdot) \|_{L^\infty (\T^3)}^2 ds 
& \leq \nu  \| \nabla \vartheta_{\rm in} (\cdot)\|_{L^\infty(\T^3)}^2 \exp \left  (\int_0^{t(\nu)} \| \nabla u (s, \cdot ) \|_{L^\infty}^2 \right ) ds 
\\
& \leq 
C  a_q^{2- \frac{\gamma}{1+ \delta} + 4 \epsilon}   a_0^{-2 - 2\epsilon \delta} \exp \left( a_{k(q)}^{2-2\gamma} a_{k(q) + 1}^{-2 -2 \epsilon\delta }\right) 
\\
& \leq C a_q^{4 \epsilon} a_0^{-2 - 2\epsilon \delta} \to 0 
\end{align*} 
as $q \to \infty $. 
Finally, $t(\nu)\to 1$ as $\nu \to 0$ follows by the fact that $k(q)\to \infty$ as $q\to \infty$.

Building upon \eqref{z2}, \eqref{z3}, and the fact that $\vt_\nu$ and $\vartheta_0$ are uniformly bounded by $1$, we deduce
\begin{equation}
     \|\vt_{\nu} - \vartheta_0\|_{L^2 ((0,1) \times \T^3)}^2
     \le 
     \nu \int_0^{t(\nu)} \| \nabla \vartheta_0 (s, \cdot) \|_{L^\infty (\T^3)}^2 ds
     +
     C(1-t(\nu)) \to 0
\end{equation}
as $q\to \infty$.

\medskip 

\noindent{\bf Proof of (iii).}\label{subsec:ThAC(iii)}
  We observe that the sequence of solutions $\vartheta_{\tilde \nu_q}$ of the advection-diffusion equation with diffusion parameter $\tilde \nu_q$, velocity field $u$, and initial datum $\vartheta_{\initial}$ satisfies
  \begin{equation}\label{z6}
  2 \,    \tilde \nu_q \int_0^{1-T_q + \bar t_q} \| \nabla  \vartheta_{\tilde \nu_q}(s,\cdot)\|_{L^2}^2 \, ds > \frac{1}{2}
  \qquad \text{for any $q \in m\N$,}
  \end{equation}
  as a direct consequence of 
 \eqref{prop:anomalous} in Proposition \ref{proposition}.
Therefore $\vt_{\tilde \nu_q}$, the third component of $v_{\tilde \nu_q}$, satisfies~\eqref{z6} as well since $\vartheta_{\tilde \nu_q} = \tilde \vartheta_{\tilde \nu_q}$ in $[0,1-T_q + \bar t_q]$.

The first two components of $v_\nu$  strongly converge to $ u$ in $ L^{\infty} ((0,2 ) \times \T^3)$ since $ \| {u} \|_{L^\infty (K_q^c \times \T^3)} \leq 2 a_q^{1- \gamma} \to 0$ as $q \to \infty$.
It is simple to see that $\{\tilde \vartheta_{\tilde \nu_q}\}_{q\in \N}$ admits limit points in the weak topology of $L^2((0,2)\times \T^2)$ and that any such limit point solves the transport equation with velocity field $u$ and initial datum $\vartheta_{\rm in}$. Let us fix a limit point and denote it by $\vartheta^{\rm ds}$. It follows by~\eqref{z6} that $\vartheta^{\rm ds}$ is a dissipative solution of the transport equation.
We define 
\begin{equation}
    v_{0}^{\rm ds} (t,x ) 
    := \begin{pmatrix}
    u (t,x)
   \\
    \vartheta^{\rm ds} (t,x)
    \end{pmatrix} \, ,
    \quad t\in (0,2), \, x\in \T^3\, ,
\end{equation}
and check that $(v_0^{\rm ds}, p_0, F_0)$ with $p_0=0$ solves \eqref{e:E}. Indeed, since the first two components of $v_{\tilde \nu_q}$ strongly converge to $ u$ in $ L^{\infty} ((0,2 ) \times \T^3)$ and the last component converges weakly to $\vartheta^{\rm ds}$, the quadratic term~$v_{\tilde \nu_q} \cdot \nabla v_{\tilde \nu_q}$ converges in the sense of distributions to $v_0^{\rm ds} \cdot \nabla v_0^{\rm ds}$. It is straightforward to check that all the other terms in the distributional formulation of \eqref{e:E} pass to the limit as $\tilde \nu_q \to 0$. Finally the admissibility condition~\eqref{admissible} of $v_0^{\rm ds}$ follows from the fact that it is a weak* limit in $L^\infty$ of admissible solutions $v_\nu$ with force $F_\nu$ and the forces $F_\nu $ are  strongly converging to $F_0$ in $L^1$.

\medskip

\noindent{\bf Proof of (iv).} 
Let $\nu_q =a_q^{2 - \gamma + \delta + 8 \epsilon} \in (\tilde \nu_{q+1} , \tilde \nu_{q}]$. As before we have 
$$ v_{\nu_q} (t,x)  = \begin{pmatrix}
 u (t,x) \mathbbm{1}_{K_q} (t)
 \\
 \vt_{\nu_q} (t,x)
\end{pmatrix}  \,.
$$
Recalling the proof of (iii), we only need to prove that the last component of $v_{\nu_q}$ strongly converges in $L^2((0,2) \times \T^3)$ to a velocity field $v_0^{\rm cs}\in L^1((0,2); C^{\alpha'}(\T^3))$ that conserves in time the spatial $L^2$ norm and the admissibility condition \eqref{admissible} will directly follow from the conservative property.

We show that $\tilde \vartheta_{\nu_q} \to \vartheta_{0}$ in $L^2((0,2) \times \T^3)$, 
where $\vartheta_0$ is the symmetric solution to the transport equation in Proposition \ref{proposition}(4). To this aim, it is enough to show that 
$\| \tilde \vartheta_{\nu_q} (t, \cdot) - \vartheta_{q} (t, \cdot )\|_{L^2(\T^2)} \to 0$ as~$q \to \infty $ for any $t \in (0,2)$, where $\vartheta_q$ is the unique solution of the transport equation with velocity field $ u_q$ and initial datum $\vartheta_{\rm in}$. Indeed, this will entail
$$
\| \tilde \vartheta_{\nu_q} - \vartheta_0 \|_{L^2 ([0,2] \times \T^2)}  \leq  \| \tilde \vartheta_{\nu_q} - \vartheta_q \|_{L^2([0,2] \times \T^2)} + \| \vartheta_{q} - \vartheta_0 \|_{L^2([0,2] \times \T^2)} \to 0 \qquad \mbox{ as } q \to \infty \, ,
$$
where the second term $\| \vartheta_{q} - \vartheta_0 \|_{L^2 ((0,2) \times \T^3)} \to 0$ as $q \to \infty$, thanks to $\vartheta_q (t,\cdot)= \vartheta_0 (t,\cdot)$ for any $t \in [1-T_q, 1+T_q ]^c$, and the $L^\infty$ bound $\| \vartheta_0 \|_{L^\infty ((0,2) \times \T^2)} + \| \vartheta_q \|_{L^\infty ((0,2) \times \T^2)} \leq 2  $.

For any $t \in (0,2)$, using a standard energy estimate with the regularity bound~\eqref{prop:regularity-solution} and the symmetry property~\eqref{vectorfield_VV} from Proposition~\ref{proposition}, we estimate
\begin{align*}
&\| \tilde \vartheta_{\nu_q} (t, \cdot) - \vartheta_q (t, \cdot ) \|_{L^2(\T^2)}^2 \\
\leq &
2 \nu_q \left | \int_0^t \int_{\T^2} \nabla \tilde \vartheta_{\nu_q} (s,x) \cdot \nabla \vartheta_{q}(s,x) dx ds \right | 
\\
 \leq & 2  \left ( \nu_q\int_0^t \int_{\T^2} | \nabla  \vartheta_{q} (s,x)|^2  dx ds \right )^{1/2} 
\\
 = & 2 \left (\nu_q \sum_{j = q}^\infty \int_{\mathcal{I}_j\cup \mathcal{J}_j} \int_{\T^2} | \nabla \vartheta_{q} (s,x)|^2  dx ds 
 +
 \nu_q \sum_{j = 0}^{q-1}\int_{\mathcal{I}_j\cup \mathcal{J}_j} \int_{\T^2} | \nabla \vartheta_{q} (s,x)|^2  dx ds 
 \right )^{1/2}
 \\
 \le &
   C \left(  
   a_q^{2-\gamma + \delta + 8\epsilon} a_q^\gamma a_q^{-2(1+3\epsilon(1+\delta))}
   +
   a_q^{2-\gamma + \delta + 8\epsilon}
   \sum_{j=0}^{q-1} a_j^{\gamma} a_{j+1}^{-2(1+3\epsilon(1+\delta))}  
   \right)^{1/2} 
\\
& \leq C  a_q^{1- \frac{\gamma}{2} + \frac{\delta}{2} + 4 \epsilon } a_{q-1}^{\gamma/2} a_q^{-1 -3 \epsilon (1+\delta)} 
\leq C a_q^{\frac{\delta}{2} - \frac{\delta \gamma}{2}} \to 0\, ,
\end{align*}
as $q \to \infty$,
where we used $\gamma < 1$, $a_{q} = a_{q-1}^{1+ \delta}$, and $\delta \in (0,\sfrac{1}{8})$.

We finally show that $v_0^{\rm cs}\in L^1((0,2); C^{\alpha'}(\T^3))$.
Using \eqref{p:support} and \eqref{s:vectorfield_I_q5_new} we deduce
\begin{align*}
\| u \|_{L^1((0,2); C^{\alpha ' })} & = 2 \sum_{q=1}^\infty \int_{{\mathcal{I}}_q} \|u (s,\cdot)\|_{C^{\alpha ' }(\T^2)}  ds \leq  2 \sum_{q=1}^\infty \int_{{\mathcal{I}}_q} \|u (s,\cdot)\|_{L^\infty(\T^2)}^{1- \alpha '  }   \|u (s,\cdot)\|_{W^{1,\infty}(\T^2)}^{ \alpha ' }   ds 
\\
& \le C\sum_{q=0}^\infty a_q^{ \gamma}  a_q^{ 1 - \gamma  } a_{q+1}^{- \alpha '  (1 + \epsilon \delta)} = C \sum_{q=0}^\infty    a_{q}^{1 - \alpha '  (1 + \epsilon \delta) (1+ \delta)}  < \infty
\end{align*}
recalling that $\alpha ' = 3 \alpha$ and $\alpha = \beta$, and the last inequality holds thanks to  the condition \eqref{c:alpha_beta_eps_kappa}, which implies $1 - 3 \alpha (1 + \epsilon \delta) (1+ \delta) >0$.
For the last component of $v_{0}^{\cc}$, namely $\vartheta_0$, we recall that
$$ 
\vartheta_0 (t,x) = \vartheta_0 (2-t, x) \qquad \mbox{for any } x \in \T^3 \mbox{ and } t \in (1,2]\, ,
$$
and that it solves the transport equation (namely~\eqref{e:advection-diffusion} with $\nu =0$) with velocity field $u$.
Therefore, it is sufficient to estimate $\vartheta_0$ in $[0,1] \times \T^2$. 
Using \eqref{prop:regularity-solution} in Proposition \ref{proposition} we have
\begin{align*}
\| \vartheta_0 \|_{L^1((0,1); C^{\alpha ' })}
&= \sum_{q=0}^\infty \int_{{\mathcal{I}}_q} \| \vartheta_0 (s, \cdot ) \|_{ C^{\alpha '}}  ds \leq 4 \| \nabla \vartheta_{\initial} \|_{L^\infty}  \sum_{q=0}^{\infty} a_q^{ \gamma -  \gamma \delta} a_{q+1}^{- \alpha ' (1+ 3  \epsilon (1+ \delta))} 
\\
& =4  \| \nabla \vartheta_{\initial} \|_{L^\infty}  \sum_{q=0}^{\infty} a_q^{ \gamma -  \gamma \delta -  \alpha ' (1+ 3\alpha '  \epsilon (1+ \delta))(1 + \delta)}  < \infty
\end{align*}
where the last estimate holds thanks to $\alpha= \alpha ' /3$, \eqref{c:alpha_beta_eps_kappa}, \eqref{d:gamma}, and $\epsilon < \frac{\delta}{16 (1+ \delta)^2 }$ (a consequence of~\eqref{c:eps_delta}).

\section{Proof of Theorem \ref{t_selection}} \label{section:proof_selection}

Let $\alpha ' \in [0,1)$ be as in Theorem \ref{t_selection}. We fix $\alpha = \alpha '$ and $\beta =0$ and we choose the parameters $\delta$, $\epsilon$, $\gamma$, and~$\{ a_q \}_{q \in \N}$ as in Section~\ref{ss:para}. The parameters satisfy \eqref{c:beta_eps_condition_all}, \eqref{d:gamma}, \eqref{c:d_0}, and the further condition
\begin{align} \label{c:add_b}
1  - \alpha ' (1 + \epsilon \delta)(1 + \delta)  - \frac{\delta}{4 } >0 \, 
\end{align}
which is compatible with all the other conditions.
Let $(v_\nu, p_\nu, F_\nu)$ be the solution to \eqref{e:NSE}, with initial datum as in \eqref{initialdatum_NS}, built in Section \ref{sec:solution 3d NS}.

In order to prove Theorem~\ref{t_selection} we need to show the following facts:
\begin{itemize}

\item[(i)] There holds
$$
\sup_{\nu \in (0,a_0^2)}
 \| v_\nu \|_{L^{\infty } ([0,2] \times \T^3)}
 + 
 \| F_\nu \|_{C^{\alpha'}((0,2)\times \T^3)}
< \infty \, .
$$
Moreover, $F_\nu \to F_0$ in $C^{\alpha'}((0,2)\times \T^3)$.


\item[(ii)] There exist $v_0^{\rm ds}\in L^\infty([0,2]\times \T^3)$ solution to \eqref{e:E} with initial datum \eqref{initialdatum_NS} and a sequence $q_k\to \infty$, such that
$v_{\tilde \nu_{q_k}} \rightharpoonup v_0^{\rm ds}$ weakly in $L^2([0,2]\times \T^3)$. Moreover,
\begin{equation} 
 2 \, \tilde \nu_{q_k} \int_0^1 \int_{\T^3} | \nabla v_{\tilde \nu_{q_k}}|^2 \, dx\,dt \geq 1/2 \, \qquad \text{for any } q_k.
\end{equation}
In particular $v_0^{\rm ds}$ is an admissible dissipative solution of \eqref{e:E}.

\item[(iii)] Set $\nu_q =a_q^{2 + 3 \epsilon}$.
There exists $v_0^{\rm cs}\in L^\infty((0,2) \times \T^3)$, an (admissible) conservative solution to \eqref{e:E} with initiald datum \eqref{initialdatum_NS}, such that $v_{\nu_q} \to v_0^{\rm cs}$ in $L^2((0,2)\times \T^3)$. 
\end{itemize}

\medskip

\noindent{\bf Proof of (i).}
 Since $u$ is bounded, more precisely $\| u (t, \cdot) \|_{L^\infty ((0,2) \times \T^3)} \leq 2 a_0^{1- \gamma} \leq 1$
 and $\| \tilde  \vartheta_\nu \|_{L^\infty ((0,2) \times \T^3)} \leq \| \vartheta_{\initial} \|_{L^\infty (\T^3)} = 1$ by the maximum principle, we have  
$$
 \| v_\nu \|_{L^\infty((0,2) \times \T^3) }  \leq 1\, .
$$
Let us now show the uniform-in-viscosity regularity of $F_\nu$. If suffices to prove that there exists $C>0$ such that for any $ \nu \in (\tilde \nu_{q+1}, \tilde  \nu_q]$ we have
\begin{equation}\label{e:regforce2}
\| \partial_t u \|_{C^{\alpha '}((0,2) \times \T^3 )} \leq C 
\qquad\text{ and } \qquad
\| \nu \Delta u \|_{C^\alpha(K_q \times \T^3 )} \leq C a_q^\epsilon  \,.
\end{equation} 
We estimate the first term. Thanks to \eqref{s:vectorfield_I_q5_new} and the interpolation inequality, we have 
 \begin{align*}
 \| \partial_t u \|_{C^{\alpha '} ((0,2) \times \T^3)} & \leq  \sup_{j \in \N} \| \partial_t u  \|_{L^\infty (\mathcal{I}_j; C^{\alpha'}( \T^3))}   +  \sup_{j \in \N} \| \partial_t u \|_{L^\infty (\T^3; W^{1,\infty}( \mathcal{I}_j ))}   
 \\
 & \leq C  \left ( \sup_{j \in \N} \| \partial_t u  \|^{1- \alpha' }_{L^\infty (\mathcal{I}_j; L^\infty( \T^3))} \| \partial_t u  \|^{ \alpha' }_{L^\infty (\mathcal{I}_j; W^{1,\infty}( \T^3))} + \sup_{j \in \N} a_j^{1- 2 \gamma}  \right) 
 \\
  & \leq C \sup_{j \in \N} a_j^{1- 2 \gamma} a_{j+1}^{- \alpha ' (1+ \epsilon \delta)} +1 < \infty
 \end{align*} 
where we used \eqref{c:add_b} and  $\gamma = \sfrac{\delta}{8} < \sfrac{1}{2}$. This proves the first property in~\eqref{e:regforce2}. In order to show the second property in~\eqref{e:regforce2}, we exploit~\eqref{s:vectorfield_I_q5_new} and tha fact that $\nu \in ( \tilde \nu_{q+1}, \tilde \nu_q]$ to obtain
 \begin{align*}
 \| \nu \Delta u \|_{C^{\alpha'}((0,1- T_q) \times  \T^3))} & \leq  \tilde \nu_q \sup_{j \leq q-1} \| \Delta u  \|_{C^{\alpha '} ( \mathcal{I}_j \times \T^3 )} 
 \\
 & \leq C a_q^{2 - \frac{\gamma}{1 + \delta} + 4 \epsilon} \sup_{j \leq q-1} ( a_{j}^{1- \gamma} a_{j+1}^{-2 -2 \epsilon \delta} a_{j+1}^{- \alpha ' (1+ \epsilon \delta)} +  
 a_{j}^{1- \gamma} a_{j+1}^{-2 -2 \epsilon \delta} a_j^{- \gamma})
 \\
 & \leq C a_q^{ \epsilon} a_{q-1 }^{ 1- 2 \gamma - \alpha ' (1 + \epsilon \delta)(1+ \delta)} \leq C a_q^\epsilon \,,
 \end{align*}
where we also used $a_q^{2 \epsilon} a_q^{- 2 \epsilon \delta } \leq 1$, \eqref{c:alpha_beta_eps_kappa}, and \eqref{c:add_b}.

The convergence $F_\nu \to F_0$ in $C^{\alpha'}((0,2)\times \T^3)$ can be shown along the same lines, by observing 
that~$\| \partial_t u \|_{C^{\alpha '} (K_q^c \times \T^2)} \to 0$ as $q \to \infty$.

\medskip 

\noindent{\bf Proof of (ii).}
We argue exactly as in the proof of~(iii) in Section~\ref{section:proofOnsager}.
We first notice that
$$
2 \, \tilde \nu_q \int_0^{1-T_q + \bar t_q} \| \nabla  \tilde \vartheta_{\tilde \nu_q}(s,\cdot)\|_{L^2}^2 \, ds
    =
      2 \, \tilde \nu_q \int_0^{1-T_q + \bar t_q} \| \nabla  \vartheta_{\tilde \nu_q}(s,\cdot)\|_{L^2}^2 \, ds > \frac{1}{2}
      \qquad \text{ for any $q \in m\N$,}
$$
as a direct consequence of~\eqref{prop:anomalous} in Proposition \ref{proposition}.
The first two components of $v_\nu$  strongly converge to $ u$ in $ L^{\infty} ((0,2 ) \times \T^3)$
while~$\{\tilde \vartheta_{\tilde \nu_q}\}_{q\in \N}$ admits a limit point $\vartheta^{\rm ds}$ in the weak topology of $L^2((0,2)\times \T^2)$ which solves the transport equation with velocity field $u$ and initial datum $\vartheta_{\rm in}$. Setting
\begin{equation}
    v_{0}^{\rm ds} (t,x ) 
    := \begin{pmatrix}
    u (t,x)
   \\
    \vartheta^{\rm ds} (t,x)
    \end{pmatrix} \, ,
    \quad t\in (0,2), \, x\in \T^3\, ,
\end{equation}
we can verify that $(v_0^{\rm ds}, p_0, F_0)$ with $p_0=0$ solves \eqref{e:E} and $v_0^{\rm ds}$ is an admissible solution by arguing exactly as in the proof of~(iii) in Section~\ref{section:proofOnsager}.


\medskip 

\noindent{\bf Proof of (iii).}
The first two components of $v_{\nu_q}$ strongly converge to $u$ in  $L^\infty ((0,2) \times \T^3)$.
We claim that~$\vt_{\nu_q}$, the last component of $v_{\nu_q}$, strongly converges in $L^2((0,2) \times \T^2)$ to $\vartheta_0$ (defined as in~\eqref{prop:regularity-solution} of Proposition~\ref{proposition}). Setting
\begin{equation}
    v_{0}^{\rm cs} (t,x ) 
    := \begin{pmatrix}
    u (t,x)
   \\
    \vartheta_0 (t,x)
    \end{pmatrix} \, ,
    \qquad t\in (0,2), \, x\in \T^3
\end{equation}
and observing that $\| \vartheta_0 (t,\cdot)\|_{L^2} = \| \vartheta_{\rm in}\|_{L^2}$ for any $t\in (0,2)\setminus\{1\}$, $F_{\nu_q}\to F_0$ in $C^{\alpha'}((0,2)\times \T^3)$, and $ u_q \to u$ in $L^2((0,2)\times \T^3)$, the claimed convergence suffices to conclude that $v_0^{\rm cs}$ is an (admissible) conservative solution to \eqref{e:E}.


We argue as in the proof of~(iv) in Section~\ref{section:proofOnsager}. Denoting by $\vartheta_q:(0,2)\times \T^2 \to \R$ the unique solution to the transport equation with velocity field $u_q$ and initial datum $\vartheta_{\rm in}$, we have
\begin{equation}\label{e:lastsplit}
 \|\tilde  \vartheta_{\nu_q} - \vartheta_0 \|_{L^2 ([0,2] \times \T^2)}  
 \leq  \| \tilde \vartheta_{\nu_q} - \vartheta_q \|_{L^2 ([0,2] \times \T^2)} + 
 \| \vartheta_{q} - \vartheta_0 \|_{L^2 ([0,2] \times \T^2)} \, .
\end{equation}
We notice that $\| \vartheta_{q} - \vartheta_0 \|_{L^2 ((0,2) \times \T^3)} \to 0$ as $q \to \infty$, thanks to $\vartheta_q (t,x)= \vartheta_0 (t,x)$ for any $t \in K_q$ and any $x \in \T^3$ (because of the symmetry of the velocity field $u$ as in property \eqref{vectorfield_VV} of Proposition \ref{proposition}) and to the bound $\| \vartheta_0 \|_{L^\infty ((0,2) \times \T^2)} + \| \vartheta_q \|_{L^\infty ((0,2) \times \T^2)} \leq 2  $. For any $t \in (0,2)$, we estimate the first term in~\eqref{e:lastsplit} relying on the regularity bound~\eqref{prop:regularity-solution} and  the symmetry property~\eqref{vectorfield_VV} in Proposition~\ref{proposition}. We have
 \begin{align*}
 & \| \tilde \vartheta_{\nu_q} (t, \cdot) - \vartheta_q (t, \cdot ) \|_{L^2(\T^2)}^2 \\
  \leq & 2 \nu_q \left | \int_0^t \int_{\T^2} \nabla \tilde \vartheta_{\nu_q} (s,x) \cdot \nabla \vartheta_{q}(s,x) dx ds \right | 
\\
  \leq & 2  \left (\nu_q \int_0^t \int_{\T^2} | \nabla \vartheta_{q} (s,x)|^2  dx ds \right )^{1/2} 
 \\
  = & 2 \left (\nu_q \sum_{j = q}^\infty \int_{\mathcal{I}_j\cup \mathcal{J}_j} \int_{\T^2} | \nabla \vartheta_{q} (s,x)|^2  dx ds 
 +
 \nu_q \sum_{j = 0}^{q-1}\int_{\mathcal{I}_j\cup \mathcal{J}_j} \int_{\T^2} | \nabla \vartheta_{q} (s,x)|^2  dx ds 
 \right )^{1/2}
 \\
\le & 
   C \left(  
   a_q^{2 + 3\epsilon} a_q^\gamma a_q^{-2(1+3\epsilon(1+\delta))}
   +
   a_q^{2 + 3\epsilon}
   \sum_{j=0}^{q-1} a_j^{\gamma} a_{j+1}^{-2(1+3\epsilon(1+\delta))}  
   \right)^{1/2}   
 \\ 
 \le & C \left(  
   a_q^{2 + 3\epsilon} a_{q-1}^\gamma a_q^{-2(1+3\epsilon(1+\delta))}  
   \right)^{1/2}
 \\
  \leq & Cq  a_q^{1+ \frac{3\epsilon}{2}} a_{q-1}^\gamma a_q^{-1 - 6 \epsilon }
 \leq Cq a_q^{\frac{\delta}{32} - \frac{9 \epsilon}{2}}  \to 0
 \end{align*}
 as $q \to \infty$,
 where we used $\gamma = \sfrac{\delta}{8}$, $a_{q+1} = a_q^{1+ \delta}$, $\delta \in (0,\sfrac{1}{8})$, $q a_q^{\sfrac{\delta}{32}} \leq 1$, $\sum_{j \geq q} a_j^\gamma \leq 2 a_{q}^\gamma$, and $\sfrac{\delta}{8} > \sfrac{9 \epsilon}{2}$. Therefore, $v_0^{\cc}$ satisfies \eqref{eq:conservative}.

\bibliographystyle{alpha}
 \bibliography{biblio}
 
\end{document}